\documentclass[reqno,11pt,a4paper]{article}
\usepackage{amsmath,amssymb,amsthm,color,graphicx}
\usepackage{algorithm}
\usepackage{algorithmic}

\addtolength{\textwidth}{2.2cm}
\addtolength{\oddsidemargin}{-1.1cm}
\addtolength{\evensidemargin}{-1.1cm}
\addtolength{\topmargin}{-.75cm}

\newtheorem{theorem}{Theorem}[section]

\newtheorem{proposition}[theorem]{Proposition}
\newtheorem{definition}[theorem]{Definition}
\newtheorem{lemma}[theorem]{Lemma}

\newtheorem{remark}[theorem]{Remark}

\numberwithin{equation}{section}
\numberwithin{theorem}{section}


\renewcommand{\phi}{\varphi}

\newcommand{\eps}{\varepsilon}

\newcommand{\nada}[1]{}

\newcommand{\R}{\mathbb R}

\newcommand{\beq}{\begin{equation}}
\newcommand{\eeq}{\end{equation}}

\newcommand{\Omdg}{\Omega^{\delta,\gamma}}
\newcommand{\Omdgb}{\bar{\Omega}^{\delta,\gamma}}
\newcommand{\dg}{{\delta,\gamma}}


\definecolor{darkgreen}{rgb}{0,0.55,0}

\newcommand{\res}{\mathop{\hbox{\vrule height 7pt width .5pt depth
			0pt\vrule height .5pt width 6pt depth 0pt}}\nolimits}
\newfont{\indic}{bbmss12}

\definecolor{light}{gray}{.97}

\title{Variational approximation of functionals defined on 1-dimensional connected sets in $\R^n$}
\author{M. Bonafini\thanks{Dipartimento di  Matematica, Universit\`a di Trento, Italy, e-mail: mauro.bonafini@unitn.it},
	G. Orlandi\thanks{Dipartimento di Informatica, Universit\`a di Verona, Italy, e-mail: giandomenico.orlandi@univr.it},
	\'E. Oudet\thanks{Laboratoire Jean Kuntzmann, Universit\'e de Grenoble Alpes, France, e-mail: edouard.oudet@imag.fr}}
\date{\today}

\begin{document}
	
\maketitle

\begin{abstract}
In this paper we consider the Euclidean Steiner tree problem and, more generally, (single sink) Gilbert--Steiner problems as prototypical examples of variational problems involving 1-dimensional connected sets in $\R^n$. Following the the analysis for the planar case presented in \cite{BoOrOu}, we provide a variational approximation through Ginzburg--Landau type energies proving a $\Gamma$-convergence result for $n \geq 3$.
\end{abstract}



\section{Introduction}

Given $N$ distinct points $P_1,\dots,P_N$ in $\R^n$ and $0 \leq \alpha \leq 1$, the (single sink) Gilbert--Steiner problem, or $\alpha$-irrigation problem \cite{BeCaMo, Xia} requires to find an optimal network $L$ along which to flow unit masses located at the sources $P_1, \dots, P_{N-1}$ to the target point $P_N$, where the cost of moving a mass $m$ along a path of length $\ell$ scales like $\ell m^\alpha$. The transportation network $L$ can be viewed as $L = \cup_{i=1}^{N-1}\lambda_i$, with $\lambda_i$ a path connecting $P_i$ to $P_N$ (i.e., the trajectory of the unit mass located at $P_i$), and thus the problem translates into
$$
\inf \left\{ \int_L |\theta(x)|^\alpha d{\mathcal H}^1(x), \;\;\theta(x) = \sum_{i=1}^{N-1} \mathbf{1}_{\lambda_i}(x) \right\}
\leqno{(I_\alpha)}
$$
where $\theta$ represents the mass density along the network. In particular, $(I_0)$ reduces to the optimization of the total length of the graph $L$ and corresponds to the classical Euclidean Steiner Tree Problem (STP), i.e., finding the shortest connected graph which contains the terminal points $P_1,\dots,P_N$. For any $\alpha \in [0,1]$ a solution to $(I_\alpha)$ is known to exist and any optimal network turns out to be a tree \cite{BeCaMo}.

As pointed out in the companion paper \cite{BoOrOu}, the Gilbert--Steiner problem represents the basic example of problems defined on $1$-dimensional connected sets, and it has recently received a renewed attention in the Calculus of Variations community. In the last years available results focused on variational approximations of the problem mainly in the planar case \cite{BoLeSa,ChMeFe,OuSa,Mi-al}, while higher dimensional approximations have been recently proposed in \cite{ChMeFe2, BoBrLe}.

In this paper we extend to the higher dimensional context the two dimensional analysis developed in \cite{BoOrOu} and we propose a variational approximation for $(I_\alpha)$ in the Euclidean space $\R^n$, $n \geq 3$. We prove a result in the spirit of $\Gamma$-convergence (see Theorem \ref{thm:main} and Proposition \ref{cor:minconpsi}) by considering integral functionals of Ginzburg--Landau type \cite{ABO1, ABO2} (see also \cite{Sandier}). This approach builds upon the interpretation of $(I_\alpha)$ as a mass minimization problem in a cobordism class of integral currents with multiplicities in a suitable normed group (as studied in \cite{MaMa, MaMa2}). Thus, the relevant energy turns out to be a convex positively $1$-homogeneous functional (a norm), for which one can use calibration type arguments to prove minimality of certain given configurations \cite{MaMa2, MaOuVe}. The proposed method is quite flexible and can be adapted to a variety of situations, including manifold type ambients where a suitable formulation in vector bundles can be used (this will be treated in a forthcoming work).

Eventually, we remark that another way to approach the problem is to investigate possible convex relaxations of the limiting functional, as already pointed out in \cite{BoOrOu} and then further extended in \cite{BoOu}, so as to include more general irrigation-type problems (with multiple sources/sinks) and even problems for $1$-d structures on manifolds.

The plan of the paper is as follows. In Section \ref{sec:pre} we briefly review the main concepts needed in the subsequent sections and in Section \ref{sec:recall} we recall the variational setting for $(I_\alpha)$ relying on the concept of $\Psi$-mass. We then provide in Section \ref{sec:varapp} a variational approximation of the problem in any dimension $n \geq 3$ by means of Ginzburg--Landau type energies.

\section{Preliminaries and notations}\label{sec:pre}

In this section we fix the notation used in the rest of the paper and some basic facts. We will follow closely \cite{ABO1, ABO2}, to which we refer for a more detailed treatment.

For any $n \geq 2$, we denote by $\{e_1,\dots,e_n\}$ the standard basis of $\R^n$, $B_r^n$ is the open ball in $\R^n$ with centre the origin and radius $r$, $\mathbb S^{n-1} = \partial B_1^n$ is the unit sphere in $\R^n$, and
$$
\alpha_n = |B_1^n|, \quad  \beta_n = (n-1)^{n/2}\alpha_n,
$$
where $|\cdot|$ stands for the Lebesgue measure of the given set. For $0 \leq k \leq n$ we denote by $\mathcal{H}^k$ the $k$-dimensional Hausdorff measure. Furthermore, we assume we are given $N$ distinct points $P_1,\dots,P_N$ in $\R^n$, for $n \geq 3$ and $N \geq 2$, and we denote $A = \{P_1,\dots,P_N\}$. We also assume, without loss of generality, that $A \subset B_1^n$.

\smallskip

\textbf{Ginzburg--Landau functionals.} We consider a continuous potential $W \colon \R^{n-1} \to \R$ which vanishes only on $\mathbb S^{n-2}$ and is strictly positive elsewhere, and we require
\[
\liminf_{|y|\to 1} \frac{W(y)}{(1-|y|)^2} > 0 \quad \text{and} \quad \liminf_{|y|\to \infty} \frac{W(y)}{|y|^{n-1}} > 0.
\]
Given $\eps > 0$, $\Omega \subset \R^n$ open and $u \in W^{1,n-1}(\Omega;\R^{n-1})$, we set
\begin{equation}\label{eq:gilasingle}
F_\eps(u, \Omega) := \int_{\Omega} e_\eps(u)\,dx=\int_{\Omega} \frac{1}{n-1}|Du|^{n-1}+\frac{1}{\eps^2}W(u)\,dx,
\end{equation}
where $|Du|$ is the Euclidean norm of the matrix $Du$.

\smallskip

\textbf{Currents.}
Given $k = 0,\dots,n$, let $\bigwedge^k(\R^n)$ be the space of $k$-covectors on $\R^n$ and $\bigwedge_k(\R^n)$ the space of $k$-vectors. The canonical basis of $\bigwedge^1(\R^n)$ will be denoted as $\{dx^1,\dots,dx^n\}$. For a $k$-covector $\omega$ we define its comass as
\[
||\omega||^* = \sup \{ \omega \cdot v \,:\, v \text{ is a simple $k$-vector with } |v| = 1 \}.
\]
For $\Omega \subset \R^n$, a $k$-form on $\Omega$ is a map from $\Omega$ into the space of $k$-covectors and a $k$-dimensional current is a distribution valued into the space of $k$-vectors. We denote as $\mathcal{D}^k(\Omega)$ the space of all smooth $k$-forms with compact support and as $\mathcal{D}_k(\Omega)$ the space of all $k$-currents. In particular, the space $\mathcal{D}_k(\Omega)$ can be identified with the dual of the space $\mathcal{D}^k(\Omega)$ and equipped with the corresponding weak$^*$ topology. Furthermore, for $T \in \mathcal{D}_k(\Omega)$ and an open subset $V \subset \Omega$, we define the mass of $T$ in $V$ as
\[
||T||_V = \sup \{ T(\omega) \,:\, \omega \in \mathcal{D}^k(V), ||\omega(x)||^* \leq 1 \text{ for every } x \}
\]
and we denote the mass of $T$ as $||T|| = ||T||_\Omega$. The boundary of a $k$-current $T$ is the $(k-1)$-current characterized as $\partial T(\omega) = T(d\omega)$ for every $\omega \in \mathcal{D}^{k-1}(\Omega)$, where $d\omega$ is the exterior differential of the form $\omega$.
Let $T \in \mathcal{D}_k(\Omega)$ be a current with locally finite mass, then there exist a positive finite measure $\mu_T$ on $\R^n$ and a Borel measurable map $\tau \colon \Omega \to \bigwedge_k(\R^n)$ with $||\tau||\leq 1$ $\mu_T$-a.e., such that
\begin{equation}\label{eq:normalcurr}
T(\omega) = \int_{\R^n} \omega(x) \cdot \tau(x) \,d\mu_T(x) \quad \text{for every } \omega \in \mathcal{D}^k(\Omega).
\end{equation}
We denote $|T| = |\mu_T|$ the variation of the measure $\mu_T$, so that, given $V \subset \Omega$, one has $||T||_V := |T|(V)$. A $k$-current $T$ is said to be normal whenever both $T$ and $\partial T$ have finite mass, and we denote as $\mathbb N_k(\Omega)$ such space.

Given a $k$-rectifiable set $\Sigma$ oriented by $\tau$ and a real-valued function $\theta \in L^1_{loc}(\mathcal{H}^k\res \Sigma)$, we define the current $T = [\![\Sigma, \tau, \theta]\!]$ as
\[
T(\omega) = \int_\Sigma \theta(x) \omega(x) \cdot \tau(x) \,d\mathcal{H}^k(x),
\]
and we refer to $\theta$ as the multiplicity of the current. A $k$-current $T$ is called rectifiable if it can be represented as $T = [\![\Sigma, \tau, \theta]\!]$ for a $k$-rectifiable set $\Sigma$ and an integer valued multiplicity $\theta$. If both $T$ and $\partial T$ are rectifiable, we say $T$ is an integral current and denote as $\mathcal{I}_k(\Omega)$ the corresponding group. A polyhedral current in $\R^n$ is a finite sum of $k$-dimensional oriented simplexes $S_i$ endowed with some constant integer multiplicities $\sigma_i$, and we generally assume that $S_i \cap S_j$ is either empty of consists of a common face of $S_i$ and $S_j$.
As it is done in \cite{ABO2}, we introduce the following flat norm of a current $T \in \mathcal{D}_k(\Omega)$:
\begin{equation}
\mathbf{F}_\Omega(T) := \inf \{ ||S||_\Omega \,:\, S \in \mathcal{D}_{k+1}(\Omega) \text{ and } T = \partial S \},
\end{equation}
and the infimum is taken to be $+\infty$ if $T$ is not a boundary.

\smallskip

\textbf{Jacobians of Sobolev maps and boundaries.} Given $\Omega \subset \R^n$ open and $u\in W^{1,n-2}_{\text{loc}}(\Omega;\R^{n-1}) \cap L^\infty_{\text{loc}}(\Omega;\R^{n-1})$, following \cite{JeSo02}, we define the $(n-2)$-form
\[
j(u)=\sum_{i=1}^{n-1} (-1)^{i-1}u_{i} \cdot \bigwedge_{j\neq i} du_j
\]
and we set the Jacobian of $u$ to be
\[
Ju := \frac{1}{n-1}d[j(u)]
\]
in the sense of distributions. This means that for any $\omega \in \mathcal{D}^{n-1}(\Omega)$
\[
Ju \cdot \omega = \frac{1}{n-1} \int_{\R^n} d^*\omega \cdot j(u) \,dx,
\]
where $d^*$ is the formal adjoint of $d$. By means of the $\star$ operator we can identify such a form with a $1$-current $\star J u$. In our specific context, the $\star$ operator can be defined, at the level of vectors/covectors, as follows: given a $(n-1)$-covector $w$, the vector $\star w$ is defined by the identity
\[
v \cdot \star w = (v \wedge w) (e_1 \wedge \dots \wedge e_n) \quad \text{for all } v \in \wedge^1(\R^n).
\]

Jacobians turn out to be the main tool in our analysis due to their relation with boundaries. In order to highlight such a relation we need some additional notation: given any segment $S$ in $\R^n$ and given $\delta,\gamma > 0$, let us define the set
\[
U(S,\delta,\gamma) = \left\{ x \in \R^n \,:\, \text{dist}(x,S) < \min \left\{ \delta, \frac{\gamma}{\sqrt{1+\gamma^2}}\text{dist}(x,\partial S) \right\} \right\}.
\]
If we identify the line spanned by $S$ with $\R$, we can write each point $x \in U(S,\delta,\gamma)$ as $x = (x',x'') \in \R \times \R^{n-1}$, so that
$$
U(S,\delta,\gamma) = \{ x' \in S \,:\, |x''|\leq \min( \delta, \gamma\cdot\textup{dist}(x',\partial S)) \}.
$$
We can now recall the main result of \cite{ABO1} (rewritten in our specific context).

\begin{theorem}[Theorem 5.10, \cite{ABO1}]\label{teo:prescribedsing}
	Let $M = [\![\Sigma, \tau, 1]\!]$ be the (polyhedral) boundary of a polyhedral current $N$ of dimension $2$ in $\R^n$, and let $F_0$ denote the union of the faces of $N$ of dimension $0$. Then there exists $u \in W^{1,n-2}(\R^n;\mathbb S^{n-2})$ such that $\star J u = \alpha_{n-1}M$, with $u$ locally Lipschitz in the complement of $\Sigma \cup F_0$ and constant outside a bounded neighbourhood of $N$, and $Du$ belongs to $L^p$ for every $p < n-1$ and satisfies $|Du(x)| = O(1/\textup{dist}(x,\Sigma\cup F_0))$. Moreover, there exist $\delta,\gamma > 0$ small enough such that, for each $1$-simplex $S_k \subset \Sigma$, one has
	\[
	u(x) = \frac{x''}{|x''|} \quad \text{for all } x = (x',x'') \in U(S_k, \delta,\gamma).
	\]
\end{theorem}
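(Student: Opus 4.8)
Following the explicit construction behind \cite{ABO1}, I would build $u$ from a radial model attached to $\Sigma$, globalize it using $N$ as a membrane, and then verify the Jacobian identity together with the stated estimates. First I would isolate the prototypical singularity: for a segment $S$ lying on the first coordinate axis, writing $x=(x',x'')\in\R\times\R^{n-1}$, the map $u(x)=x''/|x''|$ takes values in $\mathbb S^{n-2}$, is smooth off the axis, and satisfies $|Du(x)|=O(1/|x''|)=O(1/\mathrm{dist}(x,S))$, which is exactly the prescribed behaviour on $U(S,\delta,\gamma)$. A direct computation gives $d[j(u)]=(n-1)\,u^{\#}(dy_1\wedge\dots\wedge dy_{n-1})$, whose classical part vanishes because $u$ is $\mathbb S^{n-2}$-valued, while its distributional part concentrates on $S$; combined with $|\mathbb S^{n-2}|=(n-1)\alpha_{n-1}$ and the normalization $Ju=\frac1{n-1}d[j(u)]$, this yields $\star Ju=\alpha_{n-1}[\![S,\tau,1]\!]$ in a neighbourhood of $S$.

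Next I would exploit the hypothesis $M=\partial N$. Since $M$ is then a cycle ($\partial M=\partial\partial N=0$), the oriented edges of $\Sigma$ balance at every vertex, so the radial models attached to the simplices $S_k\subset\Sigma$ can be matched along $\Sigma$ with consistent orientation $\tau$. The polyhedral $2$-current $N$ plays the role of a membrane across which the map jumps by a full period: outside a bounded tubular neighbourhood of $N$ one sets $u$ equal to a constant, and on the complement of $N$ the local models are interpolated to that constant, which is possible precisely because $\Sigma$ bounds $N$. For $n=3$ this is the classical construction via the (suitably normalized) solid angle subtended by $N$; for $n\ge 4$ one interpolates the $\mathbb S^{n-2}$-valued data by coning over the faces of $N$, and it is this coning that forces the $0$-faces $F_0$ to be added to the singular set for Lipschitz regularity.

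Then I would verify the quantitative claims. The bound $|Du|=O(1/\mathrm{dist}(x,\Sigma\cup F_0))$ comes from the radial model near $\Sigma$ and from the coning estimate near $F_0$; integrating $|Du|^p\approx r^{-p}$ against the $(n-1)$ transverse directions (whose polar weight is $r^{n-2}$) shows $|Du|^p$ is integrable for every $p<n-1$ and only logarithmically divergent at $p=n-1$, so $u\in W^{1,n-2}(\R^n;\mathbb S^{n-2})$ but $u\notin W^{1,n-1}$ --- the very feature that later produces the length term in the $\Gamma$-limit for $F_\eps$. Constancy outside a bounded neighbourhood of $N$ is built into the construction. For the identity $\star Ju=\alpha_{n-1}M$, observe that off $\Sigma\cup F_0$ the map is locally Lipschitz and $\mathbb S^{n-2}$-valued, hence $Ju=0$ there (the pullback of the $(n-1)$-volume form by a map into $\mathbb S^{n-2}$ vanishes), so $\star Ju$ is supported on $\Sigma\cup F_0$; the local model identifies the density $\alpha_{n-1}$ with orientation $\tau$ on each open edge, and a removability argument (the finite set $F_0$ has zero capacity, and $j(u)$ is integrable enough near it) shows $F_0$ carries no mass, giving $\star Ju=\alpha_{n-1}[\![\Sigma,\tau,1]\!]=\alpha_{n-1}M$.

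I expect the behaviour at $F_0$ to be the main obstacle. Near a vertex where several oriented edges of $\Sigma$ meet, the individual radial models carry compatible degrees thanks to $\partial M=0$, but one must produce a single $\mathbb S^{n-2}$-valued map interpolating all of them which is Lipschitz away from the vertex and still respects the $O(1/\mathrm{dist})$ bound. Controlling this coning, and simultaneously checking that it adds no spurious mass to $\star Ju$ so that the Jacobian sees only the $1$-dimensional $\Sigma$ with multiplicity one, is the delicate heart of the argument.
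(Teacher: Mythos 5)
First, a point of order: the paper does not prove this statement at all --- it is quoted verbatim as Theorem 5.10 of \cite{ABO1} and used as a black box --- so there is no internal proof to compare yours against; your attempt can only be measured against the construction of \cite{ABO1}. On that score your sketch gets the peripheral ingredients right: the local model $x''/|x''|$ with $|Du|=O(1/\textup{dist}(x,S))$ and $\star Ju=\alpha_{n-1}[\![S,\tau,1]\!]$, the integrability count ($\int r^{-p}r^{n-2}\,dr<\infty$ exactly for $p<n-1$, transverse dimension $n-1$), the vanishing of the absolutely continuous part of $Ju$ for $\mathbb S^{n-2}$-valued maps, and the removability of the finite set $F_0$ from the support of the Jacobian.

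The gap is in the globalization step, which is the heart of the matter. The picture of ``$N$ as a membrane across which $u$ jumps by a full period'' is correct only for $n=3$, where $N$ has codimension $1$ and $u$ is $\mathbb S^1$-valued (the solid-angle construction). For $n\ge 4$ the target $\mathbb S^{n-2}$ is simply connected and $N$ has codimension $n-2\ge 2$, so there is no period and no jump: the theorem requires $u$ to be locally Lipschitz across the interiors of the $2$-faces \emph{and} across the interior $1$-faces of $N$, since the admissible singular set is only $\Sigma\cup F_0$. ``Coning over the faces of $N$'' does not by itself produce an $\mathbb S^{n-2}$-valued map (cones leave the sphere), and the natural alternative --- attaching an elementary dipole to each $2$-face $T_i$ of $N$ and superposing --- leaves each dipole singular along all of $\partial T_i$; one must then show that along every edge of $N$ interior to $N$ these singularities cancel \emph{pointwise}, not merely as currents (where cancellation is automatic from $\sum_i\sigma_i\,\partial T_i=\partial N=M$). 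This is exactly why $F_0$ --- which, note, consists of \emph{all} $0$-faces of $N$, including those interior to $N$ and not lying on $\Sigma$, whereas your discussion only contemplates vertices of $\Sigma$ --- survives as a genuine (point-type, hence Jacobian-free) singular set while interior edges do not. You correctly flag the vertex matching as delicate, but the interior-edge regularity is the step your proposal neither performs nor reduces to a known lemma, and it cannot be obtained by ``interpolating the local models to a constant''; in \cite{ABO1} it is achieved by an explicit construction in tubular neighbourhoods of the faces of $N$ with matching boundary behaviour on shared faces.
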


%
%
%

\section{Gilbert--Steiner problems and currents\label{sec:recall}}

In this section we briefly review (this time in terms of currents) the approach used in \cite{BoOrOu, BoOu}, which is to say the framework introduced by Marchese and Massaccesi in \cite{MaMa,MaMa2}, and describe Gilbert--Steiner problems in terms of a minimum mass problem for a given family of rectifiable $1$-currents in $\R^n$.

The set of possible minimizers for $(I_\alpha)$ can be reduced to the set of (connected) acyclic graphs $L$ that are described as the superposition of $N-1$ curves.
\begin{definition}
	We define $\mathcal{G}(A)$ to be the set of acyclic graphs $L$ of the form
	\[
	L = \bigcup_{i=1}^{N-1} \lambda_i,
	\]
	where each $\lambda_i$ is a simple rectifiable curve connecting $P_i$ to $P_N$ and oriented by an $\mathcal{H}^1$-measurable unit vector field $\tau_i$, with $\tau_i(x) = \tau_j(x)$ for $\mathcal{H}^1$-a.e. $x \in \lambda_i \cap \lambda_j$, and we denote by $\tau$ the corresponding global orientation, i.e., $\tau(x) = \tau_i(x)$ for $\mathcal{H}^1$-a.e. $x \in \lambda_i$.
\end{definition}
\noindent
It can be shown (see, e.g., \cite[Lemma 2.1]{MaMa}), that $(I_\alpha)$ is equivalent to
\begin{equation}\label{eq:redIa}
\min \left\{ \int_L |\theta(x)|^\alpha d{\mathcal H}^1, \quad L \in \mathcal{G}(A),
\;\;\theta(x) = \sum_{i=1}^{N-1} \mathbf{1}_{\lambda_i}(x) \right\}.
\end{equation}

Given now $L \in \mathcal{G}(A)$, we identify each component $\lambda_i$ with the corresponding $1$-current $\Lambda_i = [\![ \lambda_i, \tau_i, 1 ]\!]$ and we consider $\Lambda = (\Lambda_1,\dots,\Lambda_{N-1}) \in [\mathcal I_1(\R^n)]^{N-1}$.
\begin{definition}
	We define $\mathcal{L}(A)$ to be the set $\Lambda \in [\mathcal I_1(\R^n)]^{N-1}$ such that each component is of the form $\Lambda_i = [\![ \lambda_i, \tau_i, 1 ]\!]$ for some $L \in \mathcal{G}(A)$, and write $\Lambda \equiv \Lambda_L$ to highlight the supporting graph.
\end{definition}
\noindent
Given $\Lambda = (\Lambda_1,\dots,\Lambda_{N-1}) \in [\mathbb N_1(\R^n)]^{N-1}$ and a function $\phi \in C^\infty_c(\R^d; \R^{d\times N-1})$, with $\phi = (\phi_1, \dots, \phi_{N-1})$, one sets
\[
\langle \Lambda, \phi\rangle = \sum_{i=1}^{N-1} \langle \Lambda_i, \phi_i \rangle
\]
and for a norm $\Psi$ on $\R^{N-1}$, we define the $\Psi$-mass measure of $\Lambda$ as
\begin{equation}\label{eq:psitv}
|\Lambda|_\Psi(\Omega):=\sup_{\substack{\omega \in C^\infty_c(\Omega;\R^n) \\ h \in C^{\infty}_c(\Omega;\R^{N-1})}} \left\{   \langle \Lambda, \omega\otimes h\rangle \, , \quad |\omega(x)|\le 1\, ,\  \Psi^*(h(x))\le 1 \right\}\, ,
\end{equation}
for $\Omega \subset \R^n$ open, where $\Psi^*(y) = \sup_{x \in \R^{N-1}} \langle y, x \rangle - \Psi(x)$ is the dual norm to $\Psi$ w.r.t. the scalar product on $\R^{N-1}$, and we let the $\Psi$-mass norm of $\Lambda$ to be
\begin{equation}\label{eq:psimass}
||\Lambda||_\Psi=|\Lambda|_\Psi(\R^n).
\end{equation}
As described in \cite{MaMa, BoOrOu, BoOu}, the problem defined in \eqref{eq:redIa} is equivalent to
\begin{equation}\label{eq:massmin}
\inf \{ ||\Lambda||_{\Psi_\alpha} \,:\, \Lambda = (\Lambda_1,\dots,\Lambda_{N-1}) \in [\mathcal I_1(\R^n)]^{N-1}, \;\partial \Lambda_i = \delta_{P_N} - \delta_{P_i}\},
\end{equation}
where $\Psi_\alpha$ is the $\ell^{1/\alpha}$ norm on $\R^{N-1}$ for $0 < \alpha \leq 1$, and the $\ell^\infty$ norm for $\alpha = 0$. This means that any minimizer $\bar\Lambda$ of \eqref{eq:massmin} is of the form $\bar\Lambda = \Lambda_{\bar L}$ for a minimizer $\bar L$ of \eqref{eq:redIa}, and given any minimizer $\bar L$ of \eqref{eq:redIa} then the corresponding $\Lambda_{\bar L}$ minimizes \eqref{eq:massmin}.

\begin{remark}
	In \cite{MaMa, MaMa2} problem \eqref{eq:massmin} is introduced in the context of a mass minimization problem for integral currents with coefficients in a suitable normed group. In that case, the $\Psi$-mass defined above is simply the mass of the current deriving from the particular choice of the norm for the coefficients group.
\end{remark}

\textbf{Calibrations}. One of the main advantages of formulation \eqref{eq:massmin} is the possibility to introduce calibration-type arguments for proving minimality of a given candidate. For a fixed $\bar\Lambda \in [\mathbb N_1(\R^n)]^{N-1}$, a (generalized) calibration associated to $\bar\Lambda$ is a linear and bounded functional $\phi \colon [\mathbb N_1(\R^n)]^{N-1} \to \R$ such that
\begin{itemize}
	\item[(i)] $\phi(\bar\Lambda) = ||\bar\Lambda||_\Psi$,
	\item[(ii)] $\phi(\partial R) = 0$ for any $R \in [\mathbb N_2(\R^n)]^{N-1}$,
	\item[(iii)] $\phi(\Lambda) \leq ||\Lambda||_\Psi$ for any $\Lambda \in [\mathbb N_1(\R^n)]^{N-1}$.
\end{itemize}
The existence of a calibration is a sufficient condition to prove minimality in \eqref{eq:massmin}. Indeed, let $\bar\Lambda$ be a competitor in \eqref{eq:massmin} and $\phi$ be a calibration for $\bar\Lambda$. Consider any $\Lambda \in [\mathbb N_1(\R^n)]^{N-1}$, with $\partial \Lambda_i = \delta_{P_N}-\delta_{P_i}$. By assumption, for each $i=1,\dots,N-1$, one has $\partial (\bar\Lambda_i-\Lambda_i) = 0$, so that there exists a $2$-current $R_i$ such that $\bar\Lambda_i = \Lambda_i + \partial R_i$. Hence,
\[
||\bar\Lambda||_\Psi \overset{\textup{(i)}}{=} \phi(\bar\Lambda) = \phi( \Lambda + \partial R) = \phi(\Lambda) + \phi(\partial R) \overset{\textup{(iii), (ii)}}{\leq} ||\Lambda||_\Psi
\]
which proves the minimality of $\bar\Lambda$ in \eqref{eq:massmin} (and, more generally, also minimality among normal currents). We also remark that once a calibration exists it must calibrate all minimizers.

\smallskip

\textbf{A calibration-type argument}. The general idea behind calibrations can be used to tackle minimality in suitable subclasses of currents, as long as the previous derivation can be proved to still hold true. Consider, as displayed in figure \ref{fig:graph3d4p}, the Steiner tree problem for four points in $\R^3$ with $P_1 = (-3/2, -\sqrt{3}/2, 0)$, $P_2 = (-3/2, \sqrt{3}/2, 0)$, $P_3 = (3/2, 0, \sqrt{3}/2)$ and $P_4 = (3/2, 0, -\sqrt{3}/2)$. Let us identify the two points $S_1 = (-1,0,0)$ and $S_2 = (1,0,0)$, and fix as norm $\Psi$ the $\ell^\infty$ norm on the coefficients space $\R^3$.
\begin{figure}
	\centering
	\includegraphics[width=0.5\linewidth]{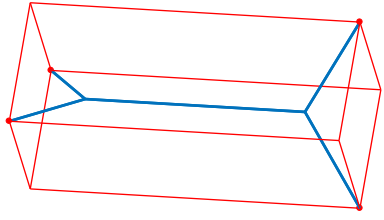}
	\caption{We consider the Steiner tree problem for $4$ vertices obtained as ``opposite'' couples of vertices of a rectangular cuboid.}
	\label{fig:graph3d4p}
\end{figure}
Given a list of points $Q_1,\dots,Q_k$, we write as $[Q_1, \dots, Q_k]$ the polyhedral current connecting them and oriented from $Q_1$ to $Q_k$. Our aim is to prove that
\[
\bar\Lambda = ([P_1, S_1, S_2, P_4], [P_2, S_1, S_2, P_4], [P_3, S_2, P_4])
\]
is a minimizer of the $\Psi$-mass $||\cdot||_\infty \equiv ||\cdot||_{\ell^\infty}$ among all currents $\Lambda \in \mathcal{B}$, where $\mathcal{B} \subset [\mathbb N_1(\R^3)]^3$ is the family of currents $\Lambda$ satisfying the given boundary conditions $\partial \Lambda_i = \delta_{P_4} - \delta_{P_i}$, and such that there exist a positive finite measure $\mu_\Lambda$ on $\R^3$, a unit vector field $\tau_\Lambda$ and a function $g^\Lambda \colon \R^3 \to \{e_1, e_2, e_3, e_1+e_2, e_1+e_2+e_3\}$ such that $\Lambda_i(\omega) = \int_{\R^3} g_i^\Lambda(x) \omega \cdot \tau_\Lambda \,d\mu_\Lambda$. Let us formally identify any such object as $\Lambda = (\tau_\Lambda \otimes g^\Lambda)\mu_\Lambda$ (loosely speaking, we consider only the family of normal rank one currents with a prescribed superposition pattern for different flows). It can be easily seen that $\bar\Lambda \in \mathcal{B}$ and for any $\Lambda \in \mathcal{B}$ we have $||\Lambda||_\infty = \int_{\R^3} ||g^\Lambda(x)||_\infty \,d\mu_\Lambda(x)$. For proving minimality of $\bar\Lambda$ for the $\ell^\infty$-mass among all competitors in $\mathcal{B}$ we can use a calibration argument: let us consider $\phi \colon [\mathbb N(\R^3)]^3 \to \R$ defined as
\[
\phi(\Lambda) = \sum_{i=1}^3 \langle \Lambda_i, \omega_i \rangle
\]
where $\omega_i$ are fixed to be
\[
\omega_1 = \frac12 dx^1 + \frac{\sqrt{3}}{2}dx^2, \quad \omega_2 = \frac12 dx^1 - \frac{\sqrt{3}}{2}dx^2, \quad \omega_3 = -\frac12 dx^1 - \frac{\sqrt{3}}{2}dx^3.
\]
One can show by direct computations that $\phi(\bar\Lambda) = ||\bar\Lambda||_\infty$, so that given any other $\Lambda \in \mathcal{B}$ and $R \in [\mathbb N_2(\R^3)]^3$ such that $\bar\Lambda = \Lambda + \partial R$, we have $||\bar\Lambda||_\infty = \phi(\bar\Lambda) = \phi(\Lambda) + \phi(\partial R)$, for which
\[
\phi(\Lambda) = \sum_{i=1}^3 \int_{\R^3} g_i^\Lambda(x) \omega_i \cdot \tau_\Lambda \,d\mu_\Lambda \leq \int_{\R^3} ||g^\Lambda||_\infty\,d\mu_\Lambda = ||\Lambda||_\infty
\]
because $g^\Lambda \in \{e_1,e_2,e_3,e_1+e_2,e_1+e_2+e_3\}$ for $\mu_\Lambda$-a.e. $x$, and
\[
\phi(\partial R) = \sum_{i=1}^3 \langle R_i, d\omega_i \rangle = 0.
\]
Hence, $||\bar\Lambda||_\infty \leq ||\Lambda||_\infty$ for any $\Lambda \in \mathcal{B}$. Up to permutations, the class $\mathcal{B}$ represents every possible acyclic graph $L \in \mathcal{G}(\{P_1,P_2,P_3,P_4\})$ with $2$ additional Steiner points and thus the support of $\bar\Lambda$ is an optimal Steiner tree within that family of graphs. Remark that any minimal configuration cannot have $0$ or $1$ Steiner points because these configurations violate the $120^\circ$ angle condition, so that we can conclude that the support of $\bar\Lambda$ is indeed an optimal Steiner tree. This extends for the first time to an higher dimensional context calibration-type arguments which up to now have been extensively used almost exclusively in the planar case, e.g. in \cite{MaMa, MaMa2}.


\medskip
In the companion paper \cite{BoOrOu}, we investigate a variational approximation of \eqref{eq:massmin} in the two dimensional case, relying on a further reformulation of the problem within a suitable family of $SBV$ functions and then providing a variational approximation based on Modica--Mortola type energies. Here, instead, we work in dimension three and higher and address \eqref{eq:massmin} directly by means of Ginzburg--Landau type energies.

%
%
%
%
%

\section{Variational approximation of $\Psi$-masses}\label{sec:varapp}

%
%

In this section we state and prove our main results, namely Proposition \ref{cor:minconpsi} and Theorem \ref{thm:main}, concerning the approximation of minimizers of $\Psi$-masses functionals through Jacobians of minimizers of Ginzburg--Landau type functionals, much in the spirit of \cite{ABO2}.

\subsection{Ginzburg--Landau functionals with prescribed boundary data}\label{subsec:singlejump}

In this section, following closely \cite{ABO2}, we consider Ginzburg--Landau functionals for functions having a prescribed trace $v$ on the boundary of a given open Lipschitz domain.

\smallskip
\textbf{Domain and boundary datum.} Fix two points $P, Q \in \R^n$, with $\max(|P|, |Q|) \leq 1$, and let $\Sigma$ be a simple acyclic polyhedral curve joining $P$ and $Q$, and oriented from $Q$ to $P$. Let $S_1,\dots,S_K$ be the $K$ segments composing $\Sigma$ and, for $\delta,\gamma>0$ small enough define
\begin{equation}\label{eq:lipdom}
U = \bigcup_{k=1}^K U(S_k, \delta,\gamma), \quad \text{and} \quad \Omega^{\delta,\gamma} = B_{10}^n \setminus \bar U.
\end{equation}
Consider the boundary datum $v \in W^{1-1/(n-1),n-1}(\partial\Omega^{\delta,\gamma};\mathbb S^{n-2})$ defined as
\begin{equation}\label{eq:bdyv}
v(x) =
\left\{
\begin{aligned}
&\frac{x''}{|x''|} &\quad& \text{for } x = (x',x'') \in \partial U \\
& e_{n-1} &\quad& \text{for }x \in \partial B_{10}^n \\
\end{aligned}
\right.
\end{equation}
By construction one has
\[
\star J v = \alpha_{n-1}(\delta_{Q}-\delta_{P}).
\]
In this context, for only two points, the $\Psi$-mass reduces (up to a constant) to the usual mass, and thus we can directly rely on Corollary 1.2 of \cite{ABO2}, which yields the following.

\begin{theorem}\label{thm:singlecomp}
	For $\delta,\gamma > 0$ small enough, consider the Lipschitz domain $\Omdg$ defined in \eqref{eq:lipdom} and let $v$ be the boundary datum defined in \eqref{eq:bdyv}.
	\begin{itemize}
		\item[(i)] Consider a (countable) sequence $\{u_{\eps}\}_\eps \subset W^{1,n-1}(\Omega^{\delta,\gamma};\R^{n-1})$ with trace $v$ on $\partial\Omega^{\delta,\gamma}$ such that $F_\eps(u_{\eps},\Omega^{\delta,\gamma}) = O(|\log\eps|)$. Then, up to subsequences, there exists a rectifiable $1$-current $M$ supported in $\bar{\Omega}^{\delta,\gamma}$, with $\partial M = \delta_{Q}-\delta_{P}$, such that the Jacobians $\star J u_\eps$ converge in the flat norm $\mathbf{F}_{\R^n}$ to $\alpha_{n-1}M$ and
		\begin{equation}\label{eq:liminfsingle}
		\liminf_{\eps\to 0} \frac{F_\eps(u_\eps, \Omega^{\delta,\gamma})}{|\log\eps|} \geq \beta_{n-1} ||M||
		\end{equation}
		\item[(ii)] Given a rectifiable $1$-current $M$ supported in $\bar{\Omega}^{\delta,\gamma}$ such that $\partial M = \delta_{Q}-\delta_{P}$, for every $\eps > 0$ we can find $u_\eps$ such that $u_\eps = v$ on $\partial{\Omega}^{\delta,\gamma}$, $\mathbf{F}_{\R^n}(\star J u_\eps - \alpha_{n-1}M) \to 0$ and
		\[
		\lim_{\eps\to 0} \frac{F_\eps(u_\eps, \Omega^{\delta,\gamma})}{|\log\eps|} = \beta_{n-1} ||M||
		\]
	\end{itemize}
	In particular, given $\{u_\eps\}_\eps$ a sequence of minimizers of $F_\eps(\cdot,\Omega^{\delta,\gamma})$ with trace $v$ on $\partial \Omega^{\delta,\gamma}$, then $F_\eps(u_\eps,\Omega^{\delta,\gamma}) = O(|\log \eps|)$ and, 
	possibly passing to a subsequence, the Jacobians $\star Ju_\eps$ converge in the flat norm $\mathbf F_{\R^n}$ to $\alpha_{n-1}M$, where $M$ minimizes the mass among all rectifiable $1$-currents supported on $\bar\Omega^{\delta,\gamma}$ with boundary $\delta_{Q}-\delta_{P}$.
\end{theorem}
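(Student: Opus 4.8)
The plan is to obtain parts (i) and (ii) as a direct transcription of Corollary 1.2 of \cite{ABO2} to the present geometric setting, and then to derive the final assertion on minimizers by the standard $\Gamma$-convergence argument. The first task is therefore to check that the data of our problem meet the hypotheses of that corollary. I would begin by recording that, since only the two endpoints $P,Q$ are involved, the relevant limiting object is a single $1$-dimensional current and the $\Psi$-mass coincides, up to the normalization constant, with the ordinary mass $\|\cdot\|$; this is what makes the scalar Ginzburg--Landau theory of \cite{ABO2} directly applicable. Next I would verify that $\Omega^{\delta,\gamma}$ is a bounded Lipschitz domain for $\delta,\gamma$ small (the sets $U(S_k,\delta,\gamma)$ being conical tubes around the segments, whose union is Lipschitz away from the vertices of $\Sigma$), and that the datum $v$ of \eqref{eq:bdyv} indeed lies in $W^{1-1/(n-1),n-1}(\partial\Omega^{\delta,\gamma};\mathbb S^{n-2})$: on $\partial U$ it is the homogeneous degree-zero map $x''/|x''|$, whose tangential gradient is integrable to the required order precisely because the tubes open linearly, and on $\partial B_{10}^n$ it is constant. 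The identity $\star Jv = \alpha_{n-1}(\delta_Q-\delta_P)$ then fixes the boundary of every admissible limiting current.

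With the hypotheses in place, part (i) is the compactness-plus-lower-bound half of Corollary 1.2: from the energy bound $F_\eps(u_\eps,\Omega^{\delta,\gamma})=O(|\log\eps|)$ one extracts, through the ball-construction and slicing estimates of \cite{ABO1,ABO2}, a subsequence along which the Jacobians $\star Ju_\eps$ converge in $\mathbf F_{\R^n}$ to $\alpha_{n-1}M$ for a rectifiable $1$-current $M$, the prescribed trace forcing both $\partial M = \delta_Q-\delta_P$ and $\operatorname{supp} M \subset \bar\Omega^{\delta,\gamma}$; the estimate \eqref{eq:liminfsingle} is the associated lower bound, each unit of vorticity costing $\beta_{n-1}|\log\eps|$ of energy. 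Part (ii) is the upper-bound half: given $M$, I would approximate it in flat norm by a polyhedral current, invoke Theorem \ref{teo:prescribedsing} to produce a map whose Jacobian is $\alpha_{n-1}$ times that polyhedral current and which coincides with $x''/|x''|$ on the tubes (hence matches $v$ on $\partial U$) and is constant equal to $e_{n-1}$ away from $N$ (matching $v$ on $\partial B_{10}^n$), and then mollify at scale $\eps$ to obtain $u_\eps$ with the announced energy convergence to $\beta_{n-1}\|M\|$; a diagonal argument removes the polyhedral approximation.

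It remains to deduce the statement on minimizers, which is purely formal once (i) and (ii) are known. Fix any rectifiable competitor $M_0$ supported in $\bar\Omega^{\delta,\gamma}$ with $\partial M_0 = \delta_Q-\delta_P$ (for instance the current carried by $\Sigma$ itself) and let $u_\eps^0$ be the corresponding recovery sequence from (ii); since $\{u_\eps\}$ are minimizers with the same trace $v$, we get $F_\eps(u_\eps,\Omega^{\delta,\gamma}) \le F_\eps(u_\eps^0,\Omega^{\delta,\gamma}) = \beta_{n-1}\|M_0\|\,|\log\eps| + o(|\log\eps|)$, which yields the energy bound $O(|\log\eps|)$. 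Part (i) then supplies a subsequence along which $\star Ju_\eps \to \alpha_{n-1}M$ with $\liminf_{\eps\to 0} F_\eps(u_\eps,\Omega^{\delta,\gamma})/|\log\eps| \ge \beta_{n-1}\|M\|$. To see that $M$ is mass-minimizing, I would compare with an arbitrary competitor $M'$: its recovery sequence $u_\eps'$ from (ii) satisfies $F_\eps(u_\eps,\Omega^{\delta,\gamma}) \le F_\eps(u_\eps',\Omega^{\delta,\gamma})$, so dividing by $|\log\eps|$ and passing to the limit gives $\beta_{n-1}\|M\| \le \beta_{n-1}\|M'\|$, i.e. $\|M\|\le\|M'\|$.

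The step I expect to be the genuine obstacle is the verification that this Dirichlet problem really falls within the scope of Corollary 1.2 of \cite{ABO2}: one must make sure that the fixed trace $v$ both transmits the correct boundary $\delta_Q-\delta_P$ to the limiting current and remains compatible with the recovery construction near the re-entrant junctions of the tubes, and that no vorticity escapes into the removed region $U$ — this is exactly why the tubes are built with the linear opening rate controlled by $\gamma$ and why $v$ is taken to be the hedgehog map there. The remaining analytic content (slicing lower bounds, mollification upper bounds, and the precise values of the constants $\alpha_{n-1}$ and $\beta_{n-1}$) is inherited from \cite{ABO1,ABO2} and need not be reproved.
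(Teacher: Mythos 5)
Your proposal is correct and follows essentially the same route as the paper, which proves this theorem simply by observing that for two endpoints the $\Psi$-mass reduces to the ordinary mass and then invoking Corollary 1.2 of \cite{ABO2} directly; your added verifications of the hypotheses and the standard $\Gamma$-convergence deduction for minimizers are exactly the (unwritten) content the paper delegates to that reference.
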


Point $(i)$ of the previous theorem corresponds to the derivation of Section 3.1 in \cite{BoOrOu}, where we consider Modica--Mortola functionals for maps with prescribed jump, and here the prescribed jump is somehow replaced by the prescribed boundary datum ``around'' the drift $\Sigma$.
As it is done in \cite{BoOrOu}, the idea is now to extend the previous (single-component) result to problems involving $\Psi$-masses for $N \geq 3$.


\subsection{The approximating functionals $F_\eps^\Psi$}\label{subsec:muljump}

We now consider Ginzburg--Landau approximations for $\Psi$-masses whenever we are given $N \geq 3$ points. Fix then a norm $\Psi \colon \R^{N-1}\to [0,+\infty)$ on $\R^{N-1}$, and consider the $\Psi$-mass defined in \eqref{eq:psimass}.

\smallskip
\textbf{Construction of the domain.}
Fix a family of $N-1$ simple polyhedral curves $\gamma_i$ each one connecting $P_i$ to $P_N$ and denote by $\Gamma_i =[\![\gamma_i, \tau_i, 1]\!]$ the associated $1$-current (oriented from $P_N$ to $P_i$). Suppose, without loss of generality, that $\gamma_i \cap \gamma_j = \{P_N\}$ for any $i\neq j$, i.e., any two curves do not intersect each other. Every $\gamma_i$ can then be viewed as the concatenation of $m_i$ (oriented) segments $S_{i,1}, \dots, S_{i,m_i}$, for each of which we consider the neighbourhood
$$
U_{i,j}^{\delta,\gamma} = U(S_{i,j}, \delta, \gamma)
$$
for $\delta,\gamma > 0$.
Define now $V_i^{\delta,\gamma} = \cup_{j} U_{i,j}^{\delta,\sigma}$ and observe that, by finiteness, we can fix $\delta,\gamma$ sufficiently small such that $\bar V_i^{\delta,\gamma} \cap \bar V_j^{\delta,\gamma} = \{P_N\}$ for any $i \neq j$. The domain we are going to work with is
\begin{equation}\label{eq:domain}
\Omdg = B_{10}^n \setminus \left( \cup_i \bar V_i^\dg \right)
\end{equation}

\smallskip
\textbf{Boundary datum and approximating functionals.} Following the same idea used in the previous section, fix $N-1$ functions $v_i \in W^{1-1/(n-1),n-1}(\partial\Omega^\dg;\mathbb S^{n-2})$ such that
\[
v_i(x) =
\left\{
\begin{aligned}
& \frac{x''}{|x''|} &\quad& \text{for }x = (x',x'') \in \partial U_{i,j}^\dg \\
& e_{n-1} &\quad& \text{for }x \in \partial \Omega^{\delta,\gamma} \setminus \partial V_{i}^\dg \\
\end{aligned}
\right.
\]
By construction $v_i$ ``winds around'' $\gamma_i$ and is constant on the rest of the given boundary. As such, one sees that $\star J v_i = \alpha_{n-1}(\delta_{P_N}-\delta_{P_i})$. As our functional space we consider
\begin{equation}\label{eq:H}
H_i^\dg = \{u \in W^{1,n-1}(\Omega^\dg; \R^{n-1}) \,:\,u|_{\partial\Omega^\dg} = v_i\}, \quad H^\dg = H_1^\dg\times\dots\times H_{N-1}^\dg,
\end{equation}
and for $U = (u_1,\dots,u_{N-1}) \in H^\dg$ and $\vec e_\eps(U) = (e_\eps(u_1), \dots, e_\eps(u_{N-1}))$, we define the approximating functionals
\begin{equation}\label{eq:fpsieps}
{F}_\eps^\Psi(U,\Omega^\dg) = |\vec e_\eps (U) \,dx|_{\Psi}(\Omega^\dg),
\end{equation}
or equivalently, thanks to \eqref{eq:psitv},
\begin{equation}\label{eq:fpsiepsdef}
{ F}_\eps^\Psi(U,\Omega^\dg) = \sup_{\substack{\phi \in C^{\infty}_c(\Omega^\dg;\R^{N-1})}} \left\{ \sum_{i=1}^{N-1} \int_{\Omega^\dg} \phi_i e_\eps(u_i)\,dx, \quad \Psi^*(\phi(x))\le 1 \right\}.
\end{equation}

\smallskip
\textbf{Lower-bound inequality} Results on ``compactness'' and lower-bound inequality presented in the previous section extends to $F_\eps^\Psi$ as follows.

\begin{proposition}\label{prop:comp}
Consider a (countable) sequence $\{U_{\eps}\}_\eps \subset H^\dg$ such that $F^\Psi_\eps(U_{\eps},\Omega^{\delta,\gamma}) = O(|\log\eps|)$. Then, up to subsequences, there exists a family $M = (M_1,\dots,M_{N-1})$ of rectifiable $1$-currents supported in $\bar{\Omega}^{\delta,\gamma}$, with $\partial M_i = \delta_{P_N}-\delta_{P_i}$, such that the Jacobians $\star J u_{\eps,i}$ converge in the flat norm $\mathbf{F}_{\R^n}$ to $\alpha_{n-1}M_i$ and
\begin{equation}\label{eq:gammaliminfpsi}
\liminf_{\eps\to 0} \frac{F_\eps^\Psi(U_\eps, \Omega^{\delta,\gamma})}{|\log\eps|} \geq \beta_{n-1} ||M||_\Psi.
\end{equation}
\end{proposition}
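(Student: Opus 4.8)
The plan is to reduce everything to the single-component Theorem~\ref{thm:singlecomp}, applied separately to each flow $u_{\eps,i}$, and then to reassemble the joint estimate \eqref{eq:gammaliminfpsi} from the lower semicontinuity of the $\Psi$-mass.

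\textbf{Componentwise energy bounds.} First I would extract from the single $\Psi$-mass bound a separate logarithmic energy bound for each component. Since every density $e_\eps(u_{\eps,i})$ is nonnegative, I can test the dual formula \eqref{eq:fpsiepsdef} with the single-component field $\phi = \Psi^*(e_i)^{-1} e_i$, where $e_i$ is the $i$-th vector of the standard basis of $\R^{N-1}$, so that $\Psi^*(\phi)\le 1$. This gives
\[
F_\eps(u_{\eps,i},\Omdg)=\int_{\Omdg}e_\eps(u_{\eps,i})\,dx\le \Psi^*(e_i)\,F_\eps^\Psi(U_\eps,\Omdg)=O(|\log\eps|)
\]
for every $i$, so each component satisfies the hypothesis of Theorem~\ref{thm:singlecomp}. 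Applying Theorem~\ref{thm:singlecomp}(i) to each $i$ and passing to a common diagonal subsequence produces the rectifiable $1$-currents $M_i$ supported in $\Omdgb$ with $\partial M_i=\delta_{P_N}-\delta_{P_i}$, the flat convergence $\star J u_{\eps,i}\to\alpha_{n-1}M_i$, and the liminf \eqref{eq:liminfsingle}. Since the lower bound of \cite{ABO2} is local, I would upgrade it to a statement on the energy measures: setting $\nu_{\eps,i}=\frac{1}{|\log\eps|}e_\eps(u_{\eps,i})\,dx$, which have uniformly bounded mass by the bound above, I may assume $\nu_{\eps,i}\rightharpoonup\nu_i$ weakly-$*$ and obtain the measure domination $\nu_i\ge\beta_{n-1}|M_i|$.

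\textbf{Joint bound via semicontinuity and duality.} The $\Psi$-mass of a vector of positive measures is a supremum of the linear functionals $\vec\mu\mapsto\sum_i\int\phi_i\,d\mu_i$ over admissible $\phi$, hence it is lower semicontinuous under weak-$*$ convergence. Writing $\vec\nu_\eps=(\nu_{\eps,1},\dots,\nu_{\eps,N-1})$ and using $\frac{1}{|\log\eps|}F_\eps^\Psi(U_\eps,\Omdg)=|\vec\nu_\eps|_\Psi(\Omdg)$, this yields
\[
\liminf_{\eps\to0}\frac{1}{|\log\eps|}F_\eps^\Psi(U_\eps,\Omdg)\ge |\vec\nu|_\Psi(\Omdg),
\]
and it remains to check $|\vec\nu|_\Psi(\Omdg)\ge\beta_{n-1}||M||_\Psi$. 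For this I would return to the dual formula \eqref{eq:psitv}: for admissible test functions $\omega,h$ with $|\omega|\le1$ and $\Psi^*(h)\le1$, denoting by $\tau_i$ the polar vector of $M_i$,
\[
\beta_{n-1}\sum_i M_i(h_i\omega)=\beta_{n-1}\sum_i\int h_i\,(\omega\cdot\tau_i)\,d|M_i|\le\beta_{n-1}\sum_i\int|h_i|\,d|M_i|\le\sum_i\int|h_i|\,d\nu_i\le|\vec\nu|_\Psi(\Omdg),
\]
where the last inequality uses $\phi=|h|$ as a competitor in \eqref{eq:fpsiepsdef}. Taking the supremum over $(\omega,h)$ gives exactly \eqref{eq:gammaliminfpsi}.

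\textbf{Main obstacle.} I expect the crux to be precisely this last passage, where the \emph{undirected} scalar energy densities $e_\eps(u_{\eps,i})$ — which in the limit only see the mass measures $|M_i|$ — must be reconciled with the \emph{oriented} $\Psi$-mass $||M||_\Psi$, which couples the components through their orientations. The step $\sum_i M_i(h_i\omega)\le\sum_i\int|h_i|\,d|M_i|$ discards the directional information, and its use is legitimate only because $\phi=|h|$ is itself admissible, i.e.\ because the dual norm $\Psi^*$ is monotone (absolute): this is automatic for the irrigation norms $\Psi_\alpha$, whose duals are $\ell^{1/(1-\alpha)}$, but would need separate justification for a fully general norm $\Psi$. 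The second delicate point is the localization of Theorem~\ref{thm:singlecomp}(i) into the measure inequality $\nu_i\ge\beta_{n-1}|M_i|$, which I would invoke from the local structure of the argument in \cite{ABO2} rather than reprove.
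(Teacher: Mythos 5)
Your proposal is correct and follows essentially the same route as the paper: the componentwise energy bound obtained by testing with $e_i/\Psi^*(e_i)$, the componentwise application of Theorem \ref{thm:singlecomp}(i) along a diagonal subsequence, and the recovery of the joint bound \eqref{eq:gammaliminfpsi} by testing with nonnegative $\phi$ satisfying $\Psi^*(\phi)\le 1$ together with the localized form of the single-component lower bound. Your repackaging through weak-$*$ limits of the energy measures is only cosmetic, and the one point where you are more explicit than the paper --- that passing from the oriented $\Psi$-mass of $M$ to the unoriented limiting energy densities requires $\Psi^*$ to be monotone (an absolute norm), which holds for the norms $\Psi_\alpha$ actually used --- is a genuine, if minor, observation that the paper leaves implicit.
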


\begin{proof}
For each $i = 1,\dots,N-1$, by definition of $F_\eps^\Psi$ we have
\[
\int_{\Omega^{\delta,\gamma}} e_\eps(u_{\eps,i})\,dx \leq \Psi^*(e_i) F_\eps^\Psi(U_\eps,\Omega^{\delta,\gamma}) = O(|\log \eps|)
\]
and the first part of the statement follows applying Proposition \ref{thm:singlecomp} componentwise. Fix now $\phi \in C^\infty_c(\R^n; \R^{N-1})$ with $\phi_i \geq 0$ for any $i = 1,\dots,N-1$ and $\Psi^*(\phi(x)) \leq 1$ for all $x$. Then, thanks to \eqref{eq:liminfsingle}, we have
\[
\begin{aligned}
\beta_{n-1} \sum_{i=1}^{N-1} \langle M_i, \phi_i \rangle  &\leq \frac{1}{|\log\eps|}\sum_{i=1}^{N-1} \liminf_{\eps\to 0} \int_{\Omega^{\delta,\gamma}} \phi_i e_\eps(u_{\eps,i}) \, dx \\
& \leq \frac{1}{|\log\eps|} \liminf_{\eps\to 0} \sum_{i=1}^{N-1} \int_{\Omega^{\delta,\gamma}} \phi_i e_\eps(u_{\eps,i}) \, dx \leq \liminf_{\eps\to 0} \frac{F_\eps^\Psi(U_\eps,\Omega^{\delta,\gamma})}{|\log\eps|},
\end{aligned}
\]
which yields \eqref{eq:gammaliminfpsi} taking the supremum over $\phi$.


\end{proof}

\smallskip
\textbf{Upper-bound inequality and behaviour of minimizers.} We now state and prove a version of an upper-bound inequality for the functionals $F_\eps^\Psi$ which is tailored to investigate the behaviour of Jacobians of minimizers of $F_\eps^\Psi$.

\begin{proposition}[Upper-bound inequality]\label{prop:limsup}
Let $\Lambda = \Lambda_L \in \mathcal{L}(A)$, with $L \in \mathcal{G}(A)$ an acyclic graph supported in $\Omdgb$. Then there exists a sequence $\{U_\eps\}_\eps \subset H^\dg$ such that $\mathbf F_{\R^n}(\star Ju_{\eps,i} - \alpha_{n-1}\Lambda_i) \to 0$, and
\begin{equation}\label{eq:gammalimsup}
\limsup_{\eps\to 0} \frac{F_\eps^\Psi(U_\eps, \Omdg)}{|\log \eps|} \leq \beta_{n-1} ||\Lambda||_\Psi.
\end{equation}
\end{proposition}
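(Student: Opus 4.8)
The strategy is to build the recovery sequence one component at a time, using the single--vortex construction behind Theorem \ref{thm:singlecomp}, and to arrange the pieces so that on the portions of $L$ shared by several curves the energy densities of the corresponding components \emph{coincide}. This matching is the heart of the matter: since the densities $e_\eps(u_i)$ are nonnegative, \eqref{eq:fpsiepsdef} gives $F_\eps^\Psi(U,\Omdg)=\int_{\Omdg}\Psi(\vec e_\eps(U))\,dx$, and because $\Psi$ is a norm, hence positively $1$--homogeneous but in general strictly subadditive, this reproduces the target $\Psi$--mass only if, on an edge carrying the multiplicity vector $m\in\{0,1\}^{N-1}$, one has $e_\eps(u_{\eps,i})=m_i\,\rho_\eps$ for a \emph{common} profile $\rho_\eps$, so that $\Psi(\vec e_\eps(U_\eps))=\rho_\eps\,\Psi(m)$ pointwise. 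Any mismatch of profiles would only yield the weaker, $\ell^1$--type estimate $\Psi(\vec e_\eps(U_\eps))\le\sum_i e_\eps(u_{\eps,i})\,\Psi(\mathbf e_i)$, which is strictly larger in general and would spoil \eqref{eq:gammalimsup}.

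First I would reduce to the case in which $L$ is polyhedral. Writing $\theta(x)=(\mathbf 1_{\lambda_1}(x),\dots,\mathbf 1_{\lambda_{N-1}}(x))$ one has $\|\Lambda\|_\Psi=\int_L\Psi(\theta)\,d\mathcal H^1$, and a rectifiable $L\in\mathcal G(A)$ supported in $\Omdgb$ can be approximated by polyhedral graphs $L_k\in\mathcal G(A)$, still supported in $\Omdgb$ and preserving the superposition pattern, with $\Lambda_{L_k}\to\Lambda_L$ in $\mathbf F_{\R^n}$ and $\|\Lambda_{L_k}\|_\Psi\to\|\Lambda_L\|_\Psi$; a diagonal argument then transfers the bound from the $L_k$ to $L$. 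So assume $L$ polyhedral, with maximal relatively open edges $E_1,\dots,E_m$ on each of which $\theta\equiv m^{(\ell)}$ is constant, meeting only at the finite vertex set (terminals and Steiner points). For each $i$ I would take the map $u_i^\ast\in W^{1,n-2}(\R^n;\mathbb S^{n-2})$ of Theorem \ref{teo:prescribedsing} with $M=\Lambda_i$: it winds around $\lambda_i$, equals $x''/|x''|$ on the tubes $U(S,\delta,\gamma)$ around the segments $S\subset\lambda_i$, and is locally Lipschitz away from $\lambda_i$ and its endpoints. Regularizing the core $\{|x''|<\eps\}$ by the optimal radial profile (as in \cite{ABO2}) produces $u_{\eps,i}$ with $\mathbf F_{\R^n}(\star Ju_{\eps,i}-\alpha_{n-1}\Lambda_i)\to0$ and energy concentrated along $\lambda_i$ with density $\beta_{n-1}|\log\eps|(1+o(1))$ per unit length, exactly as in Theorem \ref{thm:singlecomp}(ii). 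Since $\partial\Lambda_i=\partial\Gamma_i=\delta_{P_N}-\delta_{P_i}$, the trace $u_{\eps,i}|_{\partial\Omdg}$ is homotopic to $v_i$, so a modification in a boundary collar, at $O(1)$ energy cost and away from the shared edges, makes $u_{\eps,i}=v_i$ on $\partial\Omdg$, i.e.\ $U_\eps\in H^\dg$. The decisive observation is that on the tube $U(E_\ell,\delta,\gamma)$ around a shared edge the canonical form $x''/|x''|$ and its radial regularization depend only on the distance $|x''|$ to $E_\ell$; hence, away from the vertices, $u_{\eps,i}=u_{\eps,j}$ for all $i,j$ with $m_i^{(\ell)}=m_j^{(\ell)}=1$, so that $e_\eps(u_{\eps,i})=\rho_\eps(|x''|)=:\rho_\eps^{(\ell)}$ there, while $e_\eps(u_{\eps,i})\to0$ uniformly for $i\notin E_\ell$.

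Finally I would estimate the energy by splitting $\Omdg$, for fixed $r>0$, into the tubes $T_\ell^r=U(E_\ell,\delta,\gamma)\setminus\bigcup_vB_r(v)$ around the edges, the balls $B_r(v)$ around the vertices, and the remaining region where all densities vanish in the limit. On $T_\ell^r$ the matching gives $\Psi(\vec e_\eps(U_\eps))=\rho_\eps^{(\ell)}\Psi(m^{(\ell)})+o(1)$, and since the cross--sectional integral of $\rho_\eps^{(\ell)}$ equals $\beta_{n-1}|\log\eps|(1+o(1))$,
\[
\int_{T_\ell^r}\Psi(\vec e_\eps(U_\eps))\,dx=\beta_{n-1}|\log\eps|\,\Psi(m^{(\ell)})\,\mathcal H^1\!\big(E_\ell\setminus\textstyle\bigcup_vB_r(v)\big)+o(|\log\eps|).
\]
On each $B_r(v)$ I use $\Psi(\vec e_\eps(U_\eps))\le C\sum_i e_\eps(u_{\eps,i})$ together with the concentration of each component's energy along $\lambda_i$, which gives $\int_{B_r(v)}\Psi(\vec e_\eps(U_\eps))\,dx\le C r\,|\log\eps|+o(|\log\eps|)$. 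Summing and dividing by $|\log\eps|$,
\[
\limsup_{\eps\to0}\frac{F_\eps^\Psi(U_\eps,\Omdg)}{|\log\eps|}\le\beta_{n-1}\sum_\ell\Psi(m^{(\ell)})\,\mathcal H^1(E_\ell)+C r,
\]
and letting $r\to0$ yields \eqref{eq:gammalimsup}, since $\sum_\ell\Psi(m^{(\ell)})\mathcal H^1(E_\ell)=\|\Lambda\|_\Psi$; the required flat convergence $\mathbf F_{\R^n}(\star Ju_{\eps,i}-\alpha_{n-1}\Lambda_i)\to0$ is part of the component--wise construction.

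The main obstacle is precisely the matching of the single--component profiles on the shared edges, which is what forces the subadditive norm $\Psi$ to attain its value rather than be overestimated; the accompanying technical points are controlling the energy in the vertex balls $B_r(v)$, where several vortices meet and the maps are only locally Lipschitz, and verifying that the boundary--collar correction neither alters the limiting Jacobians nor disturbs the interior matching.
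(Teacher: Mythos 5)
Your overall strategy coincides with the paper's: reduce to a polyhedral graph via an approximation lemma and a diagonal argument, build the recovery maps componentwise from the prescribed-singularity construction of Theorem \ref{teo:prescribedsing}, and exploit the fact that on a shared segment the canonical profile $x''/|x''|$ is the \emph{same} for every component passing through it, so that the energy densities match and the duality $\sup_{\Psi^*(\phi)\le 1}\langle\phi,\cdot\rangle=\Psi(\cdot)$ produces the factor $\Psi(g^k)$ on that segment rather than the too-large $\ell^1$-type bound. Your treatment of the vertices (fixed balls $B_r(v)$ with a $Cr|\log\eps|$ bound, then $r\to 0$) differs harmlessly from the paper's, which uses tubes $U(S_k,\delta',\gamma')$ that pinch at the segment endpoints and therefore contribute only $O(1)$ there.

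There is, however, a genuine gap in the construction of the individual maps. You apply Theorem \ref{teo:prescribedsing} ``with $M=\Lambda_i$'', but that theorem requires $M$ to be the boundary of a polyhedral $2$-current, hence a cycle, while $\partial\Lambda_i=\delta_{P_N}-\delta_{P_i}\neq 0$. More seriously, the claim that the trace of the resulting map on $\partial\Omega^{\delta,\gamma}$ is homotopic to $v_i$ because $\partial\Lambda_i=\partial\Gamma_i$ is false: a map whose singular set is only $\lambda_i\cup F_0^i$ is continuous on each disk $\{x'=c,\ |x''|\le\delta\}$ transversal to $\gamma_i$, so its restriction to the linking sphere $\{x'=c,\ |x''|=\delta\}\subset\partial V_i^{\delta,\gamma}$ has degree $0$, whereas $v_i=x''/|x''|$ has degree $1$ there. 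Maps of different degree on $S^{n-2}$ are not homotopic, and no boundary-collar modification can reconcile them at $O(1)$ cost: forcing the trace $v_i$ necessarily creates an extra vortex filament running parallel to $\gamma_i$ inside the collar, which both shifts the flat limit of $\star Ju_{\eps,i}$ away from $\alpha_{n-1}\Lambda_i$ and adds an energy of order $\beta_{n-1}|\log\eps|\,\mathcal{H}^1(\gamma_i)$, destroying \eqref{eq:gammalimsup}. The fix is what the paper does implicitly: apply Theorem \ref{teo:prescribedsing} to the \emph{closed} polyhedral current $\Lambda_i-\Gamma_i$, which does bound a polyhedral $2$-current. The resulting map equals $x''/|x''|$ in tubes around both $\lambda_i$ and $\gamma_i$, hence its restriction to $\Omega^{\delta,\gamma}$ has trace exactly $v_i$ and Jacobian $\alpha_{n-1}\Lambda_i$ there; this is also why the paper's gradient estimate reads $|Du_i(x)|=O(1/\mathrm{dist}(x,\gamma_i\cup\lambda_i\cup F_0^i))$. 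With that correction the remainder of your argument goes through.
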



\begin{proof}
\emph{Step 1.} We assume that $L = \cup_i \lambda_i \in \mathcal{G}(A)$ is an acyclic polyhedral graph fully contained in $\Omega^{\delta,\gamma}$, which is to say $\lambda_i\cap \partial\Omega^{\delta,\gamma} = \{P_i,P_N\}$, and let $\tau$ be its global orientation. Such a graph $L$ can then be decomposed into a family of $K$ oriented segments $S_1, \dots, S_K$, with orientation given by $\tau$. For each segment $S_k$ consider the set $U_k^{'} = U(S_k, \delta', \gamma')$, for parameters $0 < \delta' < \delta$ and $0 < \gamma' < \gamma$, and choose $\delta',\gamma'$ small enough so that sets $U_k'$ are pairwise disjoint. Define as $V_i^{'}$ the union of the $U_k^{'}$ covering $\lambda_i$, and let $V' = \cup_i V_i' = \cup_k U_k'$. Eventually, define vectors $g^k \in \R^{N-1}$ as $g^k_i = 1$ if $S_k \subset \lambda_i$ and $g^k_i = 0$ otherwise. Collect these vectors in a function $g \colon V' \to \R^{N-1}$ defined as $g(x) = g^k$ for $x \in U_k'$.

For the construction of the approximating sequence we relay on the following fact, which is a direct consequence of Theorem \ref{teo:prescribedsing}: for each $i = 1,\dots,N-1$ there exists $u_i \in W^{1,n-2}(\Omdg;\mathbb S^{n-2})$ and a finite set of points $F_{0}^i$ such that:
	\begin{itemize}
		\item[(i)] $u_i|_{\partial \Omega^\dg} = v_i$, which is to say $u_i$ satisfies the given boundary conditions, and furthermore $\star Ju_i = \alpha_{n-1}\Lambda_i$; 
		\item[(ii)] $u_i$ is locally Lipschitz in $\Omdgb \setminus (\lambda_i \cup F_0^i)$ and
		\[
		|Du_i(x)| = O(1/\text{dist}(x,\gamma_i \cup \lambda_i \cup F_0^i));
		\]
		\item[(iii)] within the set $V'$ every function behaves like
		\[
		u_i(x) =
		\left\{
		\begin{aligned}
		& \frac{x''}{|x''|} &\quad& \text{for }x = (x',x'') \in V_{i}^{'} \\
		& e_{n-1} &\quad& \text{for }x \in \Omega^{\delta,\gamma} \setminus V_{i}^{'} \\
		\end{aligned}
		\right.
		\]
	\end{itemize}
	In particular, we observe that for any $k \in \{1,\dots, M\}$, if $S_k \subset \lambda_i$ and $S_k \subset \lambda_j$, then $u_i = u_j$ on $U_k^{'}$ by (iii). Thus, we can define a ``global'' function $u \colon V' \to S^{n-2}$ such that $u(x) = x''/|x''|$ for any $x \in V'$ and, consequently, $u_i|_{V'} = g_i(x)u(x)$.
	
	Starting form each $u_i$ we define our family of approximating maps: for any $\eps \in (0,\delta')$ let $\Omega_\eps^{\delta,\gamma} = \Omega^{\delta,\gamma} \setminus \cup_i B_{2\eps}(P_i)$, and let $u_{\eps,i} \colon \Omdg_\eps \to \R^{n-1}$ be defined as
	\begin{equation}\label{eq:defueps}
	u_{\eps,i}(x) = h_{\eps,i}(x)u_i(x) \quad \text{where} \quad h_{\eps,i}(x) = \min \left( 1, \frac{\text{dist}(x,\lambda_i \cup F_0^i)}{\eps} \right).
	\end{equation}
	Complete these maps on $B_{2\eps}(P_i) \cap \Omdg$ by means of a Lipschitz extension of the function $u_{\eps,i}$ with Lipschitz constant of the order of $1/\eps$,
	using $v_i$ as boundary value on $B_{2\eps}(P_i) \cap \partial\Omega^\dg$. 
	The resulting maps are locally Lipschitz in the complement of $\cup_k \partial S_k$, belong to $W^{1,n-1}(\Omdg; \R^{n-1})$ and by construction $u_{\eps,i}|_{\partial \Omdg} = v_i$, i.e., $u_{\eps,i} \in H_i^\dg$. Each $u_{\eps,i}$ converges strongly to $u_i$ in $W^{1,n-2}(\Omdg; \R^{n-1})$ and, in particular, the Jacobians $\star Ju_{\eps,i}$ converge to $\star Ju_i = \alpha_{n-1}\Lambda_i$ in the flat norm $\mathbf F_{\R^n}$ (see Remark 2.11 of \cite{ABO2}).
	
	We now consider the energy behaviour, working locally on every $U_k^{'}$: for $\eps \in (0,\delta')$, let us consider
	\[
	\begin{aligned}
	U_{k,\eps,1}^{'} &:= \{x \in U_k^{'} \,:\, \text{dist}(x, S_k) \leq \eps\} \cap \Omdg_\eps \\
	U_{k,\eps,2}^{'} &:= (U_k^{'} \setminus U_{k,\eps,1}^{'}) \cap \Omdg_\eps \\
	V_{out} &:= \Omdg_\eps \setminus V^{'} \\
	\end{aligned}
	\]
	Let $\phi = (\phi_1,\dots,\phi_{N-1})$, with $\phi_i\geq 0$ and $\Psi^*(\phi) \leq 1$, we compute
	\[
	\begin{aligned}
	\int_{\Omega^{\delta,\gamma}} \sum_{i=1}^{N-1} \phi_i e_{\eps}(u_{\eps,i}) \,dx &\leq \sum_{k=1}^K \left[ \int_{U_{k,\eps,1}'} \sum_{i=1}^{N-1} \phi_i e_{\eps}(u_{\eps,i}) \,dx + \int_{U_{k,\eps,2}'} \sum_{i=1}^{N-1} \phi_i e_{\eps}(u_{\eps,i}) \,dx \right] +  \\
	& + \sum_{j=1}^{N} \int_{B_{2\eps}(P_j)} \sum_{i=1}^{N-1} \phi_i e_{\eps}(u_{\eps,i}) \,dx + \int_{V_{out}} \sum_{i=1}^{N-1} \phi_i e_{\eps}(u_{\eps,i}) \,dx. \\
	\end{aligned}
	\]
	Fix $1 \leq k \leq K$ and consider the sets of indices $I_k = \{ i\,:\, S_k \subset \gamma_i \}$ and $I_k^c = \{1,\dots,N-1\} \setminus I_k$. Let us analyse separately the four kinds of integrals appearing in the above expression. 
	\begin{itemize}
		\item The first family of integrals on each $U_{k,\eps,1}'$ splits as
		\[
		\int_{U_{k,\eps,1}'} \sum_{i=1}^{N-1} \phi_i e_{\eps}(u_{\eps,i}) \,dx = \int_{U_{k,\eps,1}'} \sum_{i \in I_k} \phi_i e_{\eps}(u_{\eps,i}) \,dx + \int_{U_{k,\eps,1}'} \sum_{i \in I_k^c} \phi_i e_{\eps}(u_{\eps,i}) \,dx.
		\]
		We distinguish between two case.
		
		\emph{Case $i \in I_k$}: we have $|Du_i(x)| \leq C/\text{dist}(x,S_k)$ thanks to (iii), and therefore
		\[
		|Du_{\eps,i}(x)| \leq h_{\eps,i}(x)|Du_i(x)| + |Dh_{\eps,i}(x)||u_i(x)| \leq \frac{C}{\eps}.
		\]
		Using that $W(u_{\eps,i}) \leq C$ and $|U_{k,\eps,1}'| \leq C\eps^{n-1}$, we obtain
		\begin{equation}\label{eq:stima1}
		F_\eps(u_{\eps,i}, U_{k,\eps,1}') \leq C \quad \text{for all } k, i \text{ such that } S_k \subset \lambda_i.
		\end{equation}
		
		\emph{Case $i \in I_k^c$}: in this situation we have $u_{\eps,i} = u_i$ on $U_{k,\eps,1}'$ and $\text{dist}(x,F^i_0) \leq C\text{dist}(x,\gamma_i \cup \lambda_i)$. In particular, combining (ii) and \eqref{eq:defueps}, we have
		$$
		|Du_{\eps,i}(x)| \leq C/\text{dist}(x,F^i_0).
		$$
		Using the fact that $W(u_{\eps,i}) = 0$ in the complement of an $\eps$-neighbourhood $(\lambda_i \cup F^i_0)_\eps$ of $\lambda_i \cup F^i_0$, we get
		\begin{equation}\label{eq:stima2}
		\begin{aligned}
		F_\eps(u_{\eps,i}, U_{k,\eps,1}') &\leq C\int_{U_{k,\eps,1}'} \frac{dx}{\text{dist}(x,F^i_0)^{n-1}} + \frac{C}{\eps^2}|(\lambda_i \cup F^i_0)_\eps| \\
		&\leq C  \quad \text{for all } k, i \text{ such that } S_k \nsubseteq \lambda_i.
		\end{aligned}
		\end{equation}
		Combining \eqref{eq:stima1} and \eqref{eq:stima2} we obtain
		\begin{equation}\label{eq:stimaint1}
		\int_{U_{k,\eps,1}'} \sum_{i=1}^{N-1} \phi_i e_{\eps}(u_{\eps,i}) \,dx \leq C \quad \text{for all } 1 \leq k \leq K, 1 \leq i \leq N-1.
		\end{equation}
		
		\item The second family of integrals on each $U_{k,\eps,2}'$ splits analogously into
		\[
		\int_{U_{k,\eps,2}'} \sum_{i=1}^{N-1} \phi_i e_{\eps}(u_{\eps,i}) \,dx = \int_{U_{k,\eps,2}'} \sum_{i \in I_k} \phi_i e_{\eps}(u_{\eps,i}) \,dx + \int_{U_{k,\eps,2}'} \sum_{i \in I_k^c} \phi_i e_{\eps}(u_{\eps,i}) \,dx.
		\]
		Let us distinguish the same two cases as above.
		
		\emph{Case $i \in I_k$}: here we have $u_{\eps,i} = u_i$ within $U_{k,\eps,2}'$ and so $u_{\eps,i}$ takes values in $S^{n-2}$, reducing this way $e_\eps(u_{\eps,i})$ to $\frac{1}{n-1}|Du_i|^{n-1}$. For every $x \in U_k'$ one has
		\[
		|Du_i(x)| = \left|D \frac{x''}{|x''|}\right| = \frac{(n-2)^{1/2}}{|x''|}.
		\]
		Hence,
		\begin{equation}\label{eq:stima3}
		\begin{aligned}
		F_\eps(u_{\eps,i}, U_{k,\eps,2}') &\leq \mathcal{H}^1(S_k) \frac{(n-2)^{(n-1)/2}}{n-1} \int_{B_{\delta'}^{n-1} \setminus B_\eps^{n-1}} \frac{dx''}{|x''|^{n-1}} \\
		& \leq \mathcal{H}^1(S_k) \frac{(n-2)^{(n-1)/2}}{n-1} \int_\eps^1 \frac{(n-1)\alpha_{n-1}\rho^{n-2}}{\rho^{n-1}} \,d\rho \\
		&\leq \beta_{n-1} |\log \eps|\cdot \mathcal{H}^1(S_k)  \quad \text{for all } k, i \text{ such that } S_k \subset \lambda_i.
		\end{aligned}
		\end{equation}
		
		\emph{Case $i \in I_k^c$}: the same derivation done for obtaining \eqref{eq:stima2} applies, so that
		\begin{equation}\label{eq:stima4}
		F_\eps(u_{\eps,i}, U_{k,\eps,2}') \leq C \quad \text{for all } k, i \text{ such that } S_k \nsubseteq \lambda_i.
		\end{equation}
		
		Taking into account \eqref{eq:stima3}, \eqref{eq:stima4}, and that $\sum_{i\in I_k} \phi_i(x) = \sum_{i=1}^{N-1} g_i^k\phi_i(x) \leq \Psi(g^k)$, we have
		\begin{equation}\label{eq:stimaint2}
		\int_{U_{k,\eps,2}'} \sum_{i=1}^{N-1} \phi_i e_{\eps}(u_{\eps,i}) \,dx \leq C + \Psi(g^k)\beta_{n-1} |\log \eps|\cdot \mathcal{H}^1(S_k)
		\end{equation}
		for all $1 \leq k \leq K$, $1 \leq i \leq N-1$.
		
		\item For any given $j = 1,\dots,N$ the contribution on $B_{2\eps}(P_j)$ is of order $\eps$, so that in particular
		\begin{equation}\label{eq:stimaint3}
		\int_{B_{2\eps}(P_j)} \sum_{i=1}^{N-1} \phi_i e_{\eps}(u_{\eps,i}) \,dx \leq C.
		\end{equation}
		
		\item The last integral on $V_{out}$ can be treated as in the derivation of \eqref{eq:stima2} and \eqref{eq:stima4}, so that we have
		\begin{equation}\label{eq:stimaint4}
		\int_{V_{out}} \sum_{i=1}^{N-1} \phi_i e_{\eps}(u_{\eps,i}) \,dx \leq C \quad \text{for all } 1 \leq k \leq K, 1 \leq i \leq N-1.
		\end{equation}

	\end{itemize}
If we combine \eqref{eq:stimaint1}, \eqref{eq:stimaint2}, \eqref{eq:stimaint3}, \eqref{eq:stimaint4}, divide by $|\log\eps|$, take $\eps \to 0$ and consider the supremum over $\phi$ in view of \eqref{eq:fpsiepsdef}, we have
\[
\limsup_{\eps\to 0} \frac{F^\Psi_\eps(U_\eps,\Omega^\dg)}{|\log\eps|} \leq \beta_{n-1} |\Lambda|_\Psi(\Omega^\dg) = \beta_{n-1} ||\Lambda||_\Psi,
\]
which is the sought for conclusion.

\medskip

\emph{Step 2.} Let us consider now the case $\Lambda_L \equiv \Lambda = (\Lambda_1,\dots,\Lambda_{N-1})$, $L = \cup_i \lambda_i$ and the $\lambda_i$ are not necessarily polyhedral and possibly lying on the boundary of $\Omdg$. We rely on Lemma \ref{lemma:polyapprox} below to construct a sequence of acyclic polyhedral graphs $L_m = \cup_i \lambda_{i}^m$, $\lambda_i^m$ contained in $\Omdg$, and s.t. the Hausdorff distance $d_H(\lambda_i^m,\lambda_i) < \frac1m$ for all $i = 1,\dots,N-1$, and $||\Lambda_{L_m}||_\Psi \leq ||\Lambda_L||_\Psi + \frac1m$. For $\Lambda_{L_m} = (\Lambda_1^m, \dots, \Lambda_{N-1}^m)$, by step 1 we may construct a sequences $\{U^m_\eps\}_\eps$ such that $\mathbf{F}_{\R^n}(\star Ju_{\eps,i}^m - \alpha_{n-1}\Lambda_i^m) \to 0$ as $\eps \to 0$ for each $m$ and, in particular, 
\[
\begin{aligned}
\limsup_{\eps\to 0} \frac{F_\eps^\Psi(U_\eps^m, \Omdg)}{|\log \eps|} \leq \beta_{n-1} ||\Lambda_{L_m}||_\Psi \leq \beta_{n-1} ||\Lambda||_\Psi + \frac{C}{m}.
\end{aligned}
\]
We deduce that $\mathbf{F}_{\R^n}(\star Ju_{\eps_m,i}^m - \alpha_{n-1}\Lambda_i) \to 0$ and
\[
\limsup_{m\to \infty}  \frac{F_{\eps_m}^\Psi(U_{\eps_m}^m, \Omdg)}{|\log\eps_m|} \leq \beta_{n-1} ||\Lambda||_\Psi
\]
for a subsequence $\eps_m \to 0$ as $m \to +\infty$. Conclusion \eqref{eq:gammalimsup} follows.
\end{proof}

We recall from \cite[Lemma 3.10]{BoOrOu} the relevant approximation used above, where polyhedral approximations are here supposed to live within the set $\Omdg$ (i.e., with no relevant part on the boundary).
\begin{lemma}\label{lemma:polyapprox}
Let $L \in \mathcal{G}(A)$,  $L = \cup_{i=1}^{N-1} \lambda_i$, be an acyclic graph connecting $P_1,\dots,P_N$ with $\lambda_i \subset \Omdgb$. Then for any $\eta > 0$ there exists $L' \in \mathcal{G}(A)$, $L' = \cup_{i=1}^{N-1} \lambda_i'$, with $\lambda'_i \subset \Omdg \cup \{P_i,P_N\}$ a simple polyhedral curve of finite length connecting $P_i$ to $P_N$, such that the Hausdorff distance $d_H(\lambda_i,\lambda_i') < \eta$ and $||\Lambda_{L'}||_\Psi \leq ||\Lambda_L||_\Psi + \eta$.
\end{lemma}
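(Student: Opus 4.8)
The plan is to exploit the acyclic, hence tree-like, structure of $L$ to reduce the statement to approximating finitely many rectifiable arcs by polygonal ones, and then to push the resulting polyhedral graph slightly into the open set $\Omdg$. First I would observe that, since $L=\cup_i\lambda_i$ is acyclic and each $\lambda_i$ is a simple curve from $P_i$ to $P_N$, the pairwise overlaps $\lambda_i\cap\lambda_j$ must be connected (otherwise two distinct connecting arcs would close up a cycle). Consequently $L$ carries the structure of a finite topological tree: there are finitely many branch/terminal vertices and finitely many rectifiable edges $E_1,\dots,E_m$ meeting only at vertices, and on each edge the occupancy vector $g^{(l)}\in\{0,1\}^{N-1}$, with $g^{(l)}_i=1$ iff $E_l\subset\lambda_i$, is constant. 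In particular $\|\Lambda_L\|_\Psi=\sum_{l=1}^m\Psi(g^{(l)})\,\mathcal H^1(E_l)$, since each $\lambda_i$ traverses a fixed set of edges and the $\Psi$-mass of a graph with constant multiplicity $g^{(l)}$ on $E_l$ equals $\Psi(g^{(l)})\,\mathcal H^1(E_l)$.

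Next I would perform a polyhedral approximation edge by edge. Each $E_l$ is a rectifiable arc of finite length joining two vertices; parametrizing by arclength and inscribing a sufficiently fine partition, I replace $E_l$ by the polygonal arc $E'_l$ through the sampled points, keeping the two endpoints fixed. Inscribed polygons satisfy $\mathcal H^1(E'_l)\le\mathcal H^1(E_l)$ and are uniformly, hence Hausdorff, close to $E_l$ once the mesh is fine. Crucially, I perturb each geometric edge once and let every $\lambda_i'$ with $g^{(l)}_i=1$ reuse the same $E'_l$; this preserves the occupancy pattern exactly, so the candidate graph $L'=\cup_l E'_l$ is again acyclic with the same combinatorics, each $\lambda_i'$ is a polygonal path from $P_i$ to $P_N$ inheriting the orientation of $L$, and $\|\Lambda_{L'}\|_\Psi=\sum_l\Psi(g^{(l)})\,\mathcal H^1(E'_l)\le\|\Lambda_L\|_\Psi$. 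Since $n\ge 3$, a generic arbitrarily small perturbation of the interior vertices keeps each $\lambda_i'$ simple and makes distinct edges meet only at the prescribed vertices (two $1$-dimensional sets in $\R^n$ generically do not intersect once $2<n$), so that $L'\in\mathcal G(A)$.

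The remaining, and most delicate, point is to guarantee $\lambda_i'\subset\Omdg\cup\{P_i,P_N\}$, i.e. that every interior point (including branch points and polygon vertices) lies in the open set $\Omdg=B_{10}^n\setminus\cup_i\bar V_i^\dg$, with only the terminals allowed on $\bd\Omdg$. The arcs of $L'$ a priori lie in $\Omdgb$ and may touch $\cup_i\bd V_i^\dg$. Since the tubes $V_i^\dg$ have Lipschitz boundary and degenerate to thin cones at the tips $P_i,P_N$, I would fix a vector field pointing out of $\cup_i\bar V_i^\dg$ (into $\Omdg$) in a neighbourhood of that boundary and flow $L'$ for a short time, keeping the terminals fixed and choosing the initial edge directions at $P_i,P_N$ so as to enter the cone complement. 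This displaces all interior points into $\Omdg$ while increasing each $\mathcal H^1(E'_l)$, the Hausdorff distances, and hence $\|\Lambda_{L'}\|_\Psi$, by an amount controlled by the flow time; choosing first the mesh and then the flow time small enough yields $d_H(\lambda_i,\lambda_i')<\eta$ and $\|\Lambda_{L'}\|_\Psi\le\|\Lambda_L\|_\Psi+\eta$. I expect the main obstacle to be precisely this push-off step performed simultaneously for all edges near the terminals, where the removed tubes pinch to a point: one must move the shared arcs off the boundary coherently, without destroying the overlaps just arranged or the simplicity and acyclicity of $L'$, and while keeping the length increase within the budget $\eta$.
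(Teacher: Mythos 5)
The paper itself does not prove this lemma: it is imported from the companion paper (Lemma~3.10 of \cite{BoOrOu}) with the extra requirement that the approximating curves live in the open set $\Omdg$, so there is no internal proof to measure yours against. Your route is the natural one and its skeleton is sound: acyclicity forces each overlap $\lambda_i\cap\lambda_j$ to be a single sub-arc, so $L$ decomposes into finitely many edges $E_l$ with constant occupancy vector $g^{(l)}$ and $\|\Lambda_L\|_\Psi=\sum_l\Psi(g^{(l)})\,\mathcal H^1(E_l)$ (by biduality of $\Psi$ in \eqref{eq:psitv}); inscribed polygons do not increase length and are Hausdorff-close; reusing a single polygonal arc per geometric edge preserves the multiplicity pattern; and general position in $\R^n$, $n\ge 3$, restores simplicity and disjointness away from the tree's vertices at negligible cost.

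Two steps need tightening. First, you assert that the polygonal graph ``a priori lies in $\Omdgb$'': this is not automatic, since a chord between two consecutive sample points of $\lambda_i$ may cut through the open tubes $V_j^{\delta,\gamma}$ (for instance when $\lambda_i$ bends around a tube, or near $P_N$ where all the tubes meet). This is not fatal: for a mesh of size $\rho$ the chords penetrate $\cup_j\bar V_j^{\delta,\gamma}$ by at most $\rho$, so your outward flow still expels them, but the quantifiers must then read ``mesh $\rho\ll$ flow time $t\ll\eta$'' rather than simply choosing the flow time last and small. Second, the push-off at the pinch points, which you rightly isolate as the delicate step, should be made explicit: near a terminal the obstacle $\bar U(S,\delta,\gamma)$ is the closed cone $\{|x''|\le\gamma\,\mathrm{dist}(x',\partial S)\}$, so a displacement of magnitude proportional to $t\,\mathrm{dist}(x,P_i)$ in a direction increasing $|x''|$ (hence vanishing at $P_i$, and chosen coherently among the $N-1$ disjoint thin cones meeting at $P_N$) moves every boundary point strictly into $\Omdg$ while fixing the terminals; being a small Lipschitz perturbation of the identity it preserves simplicity, the overlap pattern and acyclicity, and increases each length and Hausdorff distance by $O(t)$. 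With these adjustments the argument is complete and constitutes a legitimate self-contained proof of the lemma.
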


Thanks to the previous propositions we are now able to prove our main result on the behaviour of the Jacobians of the minimizers.

\begin{proposition}[Behaviour of minimizers]\label{cor:minconpsi} Let $\{U_\eps\}_\eps\subset H^{\delta,\gamma}$ be a sequence of minimizers for $F_\eps^\Psi$ in $H^\dg$. Then (up to a subsequence) the Jacobians $\star J u_{\eps,i}$ converge in the flat norm $\mathbf F_{\R^n}$ to $\alpha_{n-1}M_i$, with $M = (M_1,\dots,M_{N-1})$ a minimizer of
\begin{equation}\label{eq:mindg}
\inf \{ ||\Lambda||_{\Psi} \,:\, \Lambda = (\Lambda_1,\dots,\Lambda_{N-1}) \in [\mathcal{I}_1(\R^n)]^{N-1}, \textup{spt}\,\Lambda_i \subset \Omdgb, \partial \Lambda_i  = \delta_{P_N}-\delta_{P_i} \}.
\end{equation}
\end{proposition}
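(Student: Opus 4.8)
The plan is to combine the compactness and lower-bound inequality of Proposition~\ref{prop:comp} with the upper-bound construction of Proposition~\ref{prop:limsup} in the standard $\Gamma$-convergence argument for convergence of minimizers, taking care that every object stays supported in $\Omdgb$ so that the limit competitors are exactly those of \eqref{eq:mindg}. First I would verify the energy bound $F_\eps^\Psi(U_\eps,\Omdg) = O(|\log\eps|)$ for the minimizing sequence. To do this I fix any single competitor $\Lambda = \Lambda_L \in \mathcal{L}(A)$ with $L$ an acyclic graph supported in $\Omdgb$ joining the $P_i$ to $P_N$ (such a graph exists and has finite $\Psi$-mass); Proposition~\ref{prop:limsup} produces a sequence $\{\tilde U_\eps\}_\eps \subset H^\dg$ with $\limsup_\eps F_\eps^\Psi(\tilde U_\eps,\Omdg)/|\log\eps| \le \beta_{n-1}\|\Lambda\|_\Psi < \infty$. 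Since $U_\eps$ minimizes $F_\eps^\Psi$ over $H^\dg$, we get $F_\eps^\Psi(U_\eps,\Omdg) \le F_\eps^\Psi(\tilde U_\eps,\Omdg) = O(|\log\eps|)$, which is the hypothesis needed to apply the compactness part.

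With the $O(|\log\eps|)$ bound in hand, Proposition~\ref{prop:comp} yields (up to a subsequence) a family $M = (M_1,\dots,M_{N-1})$ of rectifiable $1$-currents supported in $\Omdgb$, with $\partial M_i = \delta_{P_N}-\delta_{P_i}$, such that $\star J u_{\eps,i} \to \alpha_{n-1}M_i$ in the flat norm $\mathbf{F}_{\R^n}$, and moreover
\[
\liminf_{\eps\to 0} \frac{F_\eps^\Psi(U_\eps,\Omdg)}{|\log\eps|} \ge \beta_{n-1}\|M\|_\Psi.
\]
In particular $M$ is admissible in \eqref{eq:mindg}. The remaining point is to show $M$ is in fact a minimizer. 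To that end let $M'$ be any competitor in \eqref{eq:mindg}, i.e.\ $M' \in [\mathcal{I}_1(\R^n)]^{N-1}$ with $\mathrm{spt}\,M_i' \subset \Omdgb$ and $\partial M_i' = \delta_{P_N}-\delta_{P_i}$. Here the subtlety is that Proposition~\ref{prop:limsup} is phrased for $\Lambda = \Lambda_L \in \mathcal{L}(A)$, i.e.\ for currents whose support is an acyclic graph with unit multiplicities, whereas an arbitrary competitor $M'$ in \eqref{eq:mindg} need not have this structure. I would handle this by invoking the equivalence, recalled in Section~\ref{sec:recall}, between the $\Psi$-mass minimization \eqref{eq:massmin} and the graph formulation \eqref{eq:redIa}: the infimum in \eqref{eq:mindg} over all admissible integral currents is attained on (and equals the infimum over) configurations of the form $\Lambda_L$ with $L \in \mathcal{G}(A)$ supported in $\Omdgb$, so it suffices to test against such $\Lambda_L$.

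Thus, given $\eta > 0$, choose $L \in \mathcal{G}(A)$ supported in $\Omdgb$ with $\|\Lambda_L\|_\Psi \le \inf\eqref{eq:mindg} + \eta$. Applying Proposition~\ref{prop:limsup} to $\Lambda_L$ gives a recovery sequence $\{\hat U_\eps\}_\eps$ with $\limsup_\eps F_\eps^\Psi(\hat U_\eps,\Omdg)/|\log\eps| \le \beta_{n-1}\|\Lambda_L\|_\Psi$. Combining the three inequalities through the minimality of $U_\eps$,
\[
\beta_{n-1}\|M\|_\Psi \le \liminf_{\eps\to 0}\frac{F_\eps^\Psi(U_\eps,\Omdg)}{|\log\eps|} \le \limsup_{\eps\to 0}\frac{F_\eps^\Psi(\hat U_\eps,\Omdg)}{|\log\eps|} \le \beta_{n-1}\|\Lambda_L\|_\Psi \le \beta_{n-1}\big(\inf\eqref{eq:mindg}+\eta\big),
\]
and letting $\eta \to 0$ shows $\|M\|_\Psi \le \inf\eqref{eq:mindg}$; since $M$ is itself admissible, $M$ is a minimizer. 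I expect the main obstacle to be precisely the structural gap in the previous paragraph: the lower-bound limit $M$ is an arbitrary admissible family of rectifiable currents, while the upper-bound construction only directly realizes graph-type configurations $\Lambda_L$, so one must justify that comparing against the latter suffices, i.e.\ that the density/equivalence result identifying \eqref{eq:mindg} with the graph problem genuinely applies in the constrained domain $\Omdgb$ (rather than on all of $\R^n$). Everything else is the routine $\liminf$--$\limsup$ sandwich of $\Gamma$-convergence.
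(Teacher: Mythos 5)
Your proposal is correct and follows essentially the same route as the paper's proof: a recovery sequence from Proposition~\ref{prop:limsup} for a graph competitor gives the $O(|\log\eps|)$ bound via minimality, Proposition~\ref{prop:comp} supplies compactness and the lower bound, and the reduction of arbitrary admissible integral currents to graph-type configurations $\Lambda_{\bar L}$ (Lemma~2.1 of \cite{MaMa}, as in the derivation of \eqref{eq:redIa}) closes the gap you correctly identified as the only subtle point. The paper handles that last step with exactly the argument you anticipate.
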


\begin{proof}
Let $\Lambda = \Lambda_L$ canonically representing an acyclic graph $L \subset \Omdgb$, and let $\{V_\eps\}_\eps \subset H^\dg$ such that $\limsup_{\eps\to 0} \frac{F_\eps^\Psi(V_\eps,\Omega^\dg )}{|\log\eps|}\le ||\Lambda||_\Psi$ and $\mathbf F_{\R^n}(\star Jv_{\eps,i} - \alpha_{n-1}\Lambda_i) \to 0$. Since $F_\eps^\Psi(U_\eps,\Omega^\dg )\le F_\eps^\Psi(V_\eps,\Omega^\dg )$, by Proposition \ref{prop:comp} there exists a family $M = (M_1,\dots,M_{N-1})$ of rectifiable $1$-currents supported in $\bar{\Omega}^{\delta,\gamma}$, with $\partial M_i = \delta_{P_N}-\delta_{P_i}$, such that the Jacobians $\star J u_{\eps,i}$ converge in the flat norm $\mathbf F_{\R^n}$ to $\alpha_{n-1}M_i$. Then, by \eqref{eq:gammaliminfpsi}, we have
$$
\beta_{n-1}||M||_\Psi\le \liminf_{\eps\to 0} \frac{F_\eps^\Psi(U_\eps,\Omega^\dg )}{|\log\eps|}\le\limsup_{\eps\to 0} \frac{F_\eps^\Psi(V_\eps,\Omega^\dg )}{|\log\eps|}\le \beta_{n-1}||\Lambda||_\Psi.
$$
Given any other generic $\Lambda \in [\mathcal{I}_1(\R^n)]^{N-1}$ with $\textup{spt}\,\Lambda_i \subset \Omdgb$ and $\partial \Lambda_i  = \delta_{P_N}-\delta_{P_i}$, as one does in the derivation of \eqref{eq:redIa} (see, e.g., Lemma 2.1 in \cite{MaMa}), we can always find $\bar{L} \in \mathcal{G}(A)$ supported in $\Omdgb$ such that $||\Lambda_{\bar L}||_\Psi \leq ||\Lambda||_\Psi$, and thus $M$ minimizes \eqref{eq:mindg} as desired.
\end{proof}

Finally, let us highlight the case $\Psi = \Psi_\alpha$, where $\Psi_\alpha(g) = |g|_{1/\alpha}$ for $0<\alpha\leq 1$ and $\Psi_0(g) = |g|_\infty$, and denote $F^0_\eps \equiv F^{\Psi_0}_\eps$ and $F^\alpha_\eps \equiv F^{\Psi_\alpha}_\eps$. For $U=(u_1,\dots,u_{N-1}) \in H^\dg$ we have
\begin{equation}\label{eq:Fepsalpha}
{ F}_\eps^0(U,\Omega^\dg)=\int_{\Omega^\dg} \, \sup_{i} e_\eps(u_i) \, dx, \quad\quad F_\eps^\alpha(U,\Omega^\dg)=\int_{\Omega^\dg} \, \left(\sum_{i=1}^{N-1} e_\eps(u_i)^{1/\alpha}\right)^\alpha\, dx.
\end{equation}

\begin{theorem}\label{thm:main}
Let $\{P_1,\dots,P_N\} \subset \R^n$ such that $\max_i |P_i|=1$, and let $\Omega^\dg$ be defined as in \eqref{eq:domain} for $\delta,\gamma$ small enough, with $\gamma = \bar{c}\delta$. For $0 \leq \alpha \leq 1$ and $0 < \eps \ll \delta$, denote $F^{\alpha,\delta}_\eps \equiv { F}_\eps^\alpha(\cdot,\Omega^\dg)$, with $F^\alpha_\eps(\cdot,\Omega^\dg)$ defined in \eqref{eq:Fepsalpha}.
\begin{itemize}
\item[(i)] Let $\{ U_\eps^{\alpha,\delta} \}_\eps$ be a sequence of minimizers for $F_\eps^{\alpha,\delta}$ in $H^\dg$, with $H^\dg$ defined in \eqref{eq:H}. Then, up to subsequences, the Jacobians $\star Ju_{\eps,i}^{\alpha,\delta}$ converge in the flat norm $\mathbf F_{\R^n}$ to $\alpha_{n-1}M_i^{\alpha,\delta}$, where $M^{\alpha,\delta} = (M_1^{\alpha,\delta}, \dots, M_{N-1}^{\alpha,\delta})$ minimizes \eqref{eq:mindg}.

\item[(ii)]Let $M^{\alpha,\delta} = (M_1^{\alpha,\delta}, \dots, M_{N-1}^{\alpha,\delta})$ be a sequence of minimizers for \eqref{eq:mindg}. Then, up to subsequences, we have $\mathbf F_{\R^n}(M_i^{\alpha,\delta} - M_i^\alpha) \to 0$ as $\delta \to 0$ for every $i = 1,\dots,N-1$, with $M^\alpha = (M_1^\alpha,\dots,M_{N-1}^\alpha)$ a minimizer of
\begin{equation}\label{eq:massmin2}
\inf \{ ||\Lambda||_{\Psi_\alpha} \,:\, \Lambda = (\Lambda_1,\dots,\Lambda_{N-1}) \in [\mathcal I_1(\R^n)]^{N-1}, \;\partial \Lambda_i = \delta_{P_N}-\delta_{P_i}\}
\end{equation}
and, in turn, $M^\alpha = \Lambda_{L_\alpha}$ for an optimizer $L_\alpha$ of the $\alpha$-irrigation problem $(I_\alpha)$ with terminals $P_1,\dots,P_N$.
\end{itemize}
\end{theorem}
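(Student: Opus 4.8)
The plan is to treat the two parts separately: part (i) is essentially a specialization of what is already proved, while part (ii) contains the genuinely new argument.

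For part (i), I would simply invoke Proposition \ref{cor:minconpsi} with the specific choice $\Psi = \Psi_\alpha$ (for $0 < \alpha \le 1$) or $\Psi = \Psi_0$ (for $\alpha = 0$). Since these are honest norms on $\R^{N-1}$, the hypotheses of that proposition hold verbatim, and its conclusion is exactly the assertion of (i): the Jacobians of minimizers of $F_\eps^{\alpha,\delta}$ converge in $\mathbf F_{\R^n}$ to $\alpha_{n-1} M^{\alpha,\delta}$, with $M^{\alpha,\delta}$ a minimizer of \eqref{eq:mindg}. No further work is needed here.

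For part (ii) I set $m(\delta)$ to be the infimum in \eqref{eq:mindg} and $m$ the infimum in \eqref{eq:massmin2}, and observe first that the domains increase as $\delta \to 0$: a smaller $\delta$ gives thinner tubes $\bar V_i^\dg$, hence a larger $\Omdgb$. Consequently every competitor for \eqref{eq:mindg} is admissible for \eqref{eq:massmin2} (the latter carries no support constraint and any optimal network lies in $B_1^n \subset B_{10}^n$), so $m(\delta) \ge m$ for all $\delta$. Fixing one competitor supported in some $\Omega^{\delta_0,\gamma_0}$ — for instance any polygonal connection of the $P_i$ to $P_N$ avoiding the finitely many thin tubes, which exists since the domain is connected — bounds $m(\delta) \le m(\delta_0) < \infty$ uniformly for $\delta \le \delta_0$. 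Thus the integral currents $M_i^{\alpha,\delta}$ have uniformly bounded mass and boundary mass, and by Federer--Fleming compactness a subsequence converges in the flat norm to integral currents $M_i^\alpha$ with $\partial M_i^\alpha = \delta_{P_N} - \delta_{P_i}$. Since the $\Psi_\alpha$-mass is a supremum of weak-$*$ continuous linear functionals (cf. \eqref{eq:psitv}), it is lower semicontinuous along this convergence, giving $\|M^\alpha\|_{\Psi_\alpha} \le \liminf_{\delta \to 0} m(\delta)$; and as $M^\alpha$ is admissible for \eqref{eq:massmin2} we also have $\|M^\alpha\|_{\Psi_\alpha} \ge m$.

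Everything then hinges on the reverse (recovery) inequality $\limsup_{\delta \to 0} m(\delta) \le m$. Here I would start from an optimal network $L_\alpha$ for $(I_\alpha)$, with current $\Lambda_{L_\alpha}$ realizing $m$, and produce for each small $\delta$ an admissible competitor $\hat L^\delta \in \mathcal G(A)$ supported in $\Omdgb$ whose $\Psi_\alpha$-mass tends to $m$. The key point is that the excluded set $\cup_i \bar V_i^\dg$ is a tubular neighbourhood of radius $O(\delta)$ of the fixed $1$-dimensional polyhedral set $\cup_i \gamma_i$; since $n \ge 3$ one has $1+1 < n$, so a generic small perturbation of the $1$-dimensional network $L_\alpha$ makes it disjoint from $\cup_i \gamma_i$ away from the shared terminals, while the overlap/multiplicity pattern of the $\lambda_i$ (hence the value $\Psi_\alpha(g^k)$ on each segment) is preserved, so the $\Psi_\alpha$-mass varies continuously with the perturbation. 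Choosing the perturbation amplitude comparable to $\delta$ pushes the network outside the radius-$O(\delta)$ tubes while increasing its mass by $o(1)$ as $\delta \to 0$, yielding $m(\delta) \le \|\Lambda_{\hat L^\delta}\|_{\Psi_\alpha} \to m$. Combined with the two inequalities above this forces $m(\delta) \to m$ and $\|M^\alpha\|_{\Psi_\alpha} = m$, so $M^\alpha$ minimizes \eqref{eq:massmin2}; the equivalence of \eqref{eq:massmin2} with $(I_\alpha)$ recalled in Section \ref{sec:recall} then identifies $M^\alpha = \Lambda_{L_\alpha}$ for an optimizer $L_\alpha$ of the irrigation problem.

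The main obstacle is precisely this last perturbation step, and within it the behaviour near the terminal points: there the tubes $\bar V_i^\dg$ pinch to $P_i$ and $P_N$ along the direction of $\gamma_i$, so one must route the perturbed arcs into the terminals from outside the thin excluded cone $\{\,|x''| \le \gamma\,\mathrm{dist}(x',\partial S)\,\}$ while adding only negligible length; away from the terminals the transversality/codimension argument is routine. One should also verify that the perturbed $\hat L^\delta$ remains a simple acyclic graph in $\mathcal G(A)$, so that $\|\Lambda_{\hat L^\delta}\|_{\Psi_\alpha}$ is still computed by the same length-times-$\Psi_\alpha(g^k)$ formula, but this is preserved under small generic perturbations.
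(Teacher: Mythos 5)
Your proposal is correct and follows essentially the same route as the paper: part (i) is indeed just Proposition \ref{cor:minconpsi} with $\Psi = \Psi_\alpha$, and part (ii) proceeds by mass equiboundedness and flat-norm compactness, lower semicontinuity of the $\Psi_\alpha$-mass, and a recovery construction pushing a minimizer of \eqref{eq:massmin2} into $\Omdgb$ at cost $O(\delta)$. The only difference is cosmetic: where you spell out the codimension/transversality perturbation (including the delicate routing near the pinched terminals), the paper delegates this step to a variant of Lemma \ref{lemma:polyapprox}, so your write-up actually makes explicit the one point the paper leaves to the reader.
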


\begin{proof}
In view of Proposition \ref{cor:minconpsi} it remains to prove item $(ii)$. For each $i = 1,\dots,N-1$, the sequence $\{ M^{\alpha,\delta}_i \}_\delta$ is equibounded in mass, hence there exists a rectifiable $1$-current $M_i^\alpha$, with $\partial M_i^\alpha = \delta_{P_N} - \delta_{P_i}$, such that $M_i^{\alpha,\delta} \to M_i^\alpha$ in the flat norm. Let us call $M^\alpha = (M_1^\alpha,\dots,M_{N-1}^\alpha)$ the limiting family and let $\bar M^\alpha = (\bar M_1^\alpha,\dots,\bar M_{N-1}^\alpha)$ be a minimizer of \eqref{eq:massmin2}. In the same spirit of Lemma \ref{lemma:polyapprox}, starting with our minimizer $\bar{M}^\alpha$, we can construct a new family $\tilde M^{\alpha,\delta} = (\tilde M_1^{\alpha,\delta},\dots,\tilde M_{N-1}^{\alpha,\delta})$ supported in $\Omdgb$ such that $||\tilde M^{\alpha,\delta}||_{\Psi^\alpha} \leq ||\bar M^{\alpha,\delta}||_{\Psi^\alpha} + C\delta$. Hence,
\[
\begin{aligned}
||\bar M^\alpha||_{\Psi^\alpha} &\leq ||M^\alpha||_{\Psi^\alpha} \leq \liminf_{\delta\to 0} ||M^{\alpha,\delta}||_{\Psi^\alpha} \leq \liminf_{\delta\to 0} ||\tilde M^{\alpha,\delta}||_{\Psi^\alpha} \\
& \leq \liminf_{\delta\to 0} ||\bar M^{\alpha,\delta}||_{\Psi^\alpha} + C\delta = ||\bar M^\alpha||_{\Psi^\alpha},
\end{aligned}
\]
and so $M^\alpha$ has to be a minimizer of \eqref{eq:massmin2}. The correspondence of minimizers of \eqref{eq:massmin2}, which is to say of \eqref{eq:massmin}, with minimizers of $(I_\alpha)$ follows by the discussion of Section \ref{sec:recall}.
\end{proof}

\section*{Acknowledgements}

The first and second author are partially supported by GNAMPA-INdAM. 
The third author gratefully acknowledges the support of the ANR through the
project GEOMETRYA, the project COMEDIC and  the LabEx PERSYVAL-Lab (ANR-11-LABX-0025-01).

\bibliographystyle{plain}
\bibliography{references}

\end{document}